\newtheorem{theorem}{Theorem}[section]
\newtheorem{proposition}[theorem]{Proposition}
\newtheorem{corollary}[theorem]{Corollary}
\theoremstyle{definition}
\newtheorem{definition}[theorem]{Definition}
\newtheorem{remark}[theorem]{Remark}
\newtheorem{problem}[theorem]{Problem}
\numberwithin{equation}{section}
\numberwithin{figure}{section}
\newcommand\Hcal{\mathcal{H}}
\newcommand\Cscr{\mathscr{C}}
\newcommand\Oscr{\mathscr{O}}
\newcommand\B{\mathbb{B}}
\newcommand\C{\mathbb{C}}
\newcommand\D{\overline{\mathbb D}}
\renewcommand\D{\mathbb D}
\newcommand\N{\mathbb{N}}
\renewcommand\P{\mathbb{P}}
\newcommand\R{\mathbb{R}}
\newcommand\igot{\mathfrak{i}}
\renewcommand\igot{\mathfrak{i}}
\renewcommand\imath{\igot}
\newcommand\hra{\hookrightarrow}
\newcommand\wt{\widetilde}
\newcommand\wh{\widehat}
\newcommand\di{\partial}
\newcommand\dist{\mathrm{dist}}
\newcommand\codim{\operatorname{codim}}
\def\dist{\mathrm{dist}}
\def\Ell1{\mathrm{Ell_1}}
\def\DEll1{\mathrm{DEll_1}}
\numberwithin{equation}{section}
\begin{document}
\title{Oka-1 manifolds: New examples and properties}
\author{Franc Forstneri\v{c} \; and\; Finnur L\'arusson}

\address{Franc Forstneri\v c, Faculty of Mathematics and Physics, University of Ljubljana, and Institute of Mathematics, Physics, and Mechanics, Jadranska 19, 1000 Ljubljana, Slovenia}
\email{franc.forstneric@fmf.uni-lj.si}

\address{Finnur L\'arusson, Discipline of Mathematical Sciences, University of Adelaide, Adelaide SA 5005, Australia}
\email{finnur.larusson@adelaide.edu.au}

\subjclass[2020]{Primary 32E30, 32Q56; secondary 14H55, 14M20, 14M22.}

\date{3 April 2024.  Minor edits 5 December 2024}

\keywords{Riemann surface, complex manifold, algebraic manifold, Oka-1 manifold, Oka manifold, algebraically elliptic manifold, Runge approximation}

\begin{abstract}
In this paper we investigate Oka-1 manifolds and Oka-1 maps,
a class of complex manifolds and holomorphic maps recently introduced by 
Alarc\'on and Forstneri\v c. Oka-1 manifolds are characterised by the property 
that holomorphic maps from any open Riemann surface to the manifold satisfy 
the Runge approximation and Weierstrass interpolation conditions,
while Oka-1 maps enjoy similar properties for liftings of maps from 
open Riemann surfaces in the absence of topological obstructions.
We also formulate and study the algebraic version of the Oka-1 condition,
called aOka-1. We show that it is a birational invariant for 
compact algebraic manifolds
and holds for all rational manifolds. 
This gives a Runge approximation theorem for maps from compact 
Riemann surfaces to uniformly rational projective manifolds.  
Finally, we study a class of complex manifolds with an approximation property for holomorphic sprays of discs. This class
lies between the smaller class of Oka manifolds and the bigger class 
of Oka-1 manifolds and has interesting functorial properties.
\end{abstract}

\maketitle



%
%
%
%
\section{Introduction}\label{sec:intro} 

The class of Oka-1 manifolds was introduced in the literature 
by Alarc\'on and Forstneri\v c \cite{AlarconForstneric2023Oka1}. 
These are complex manifolds that admit plenty of holomorphic curves 
parametrised by an arbitrary open Riemann surface. 
Here is the precise definition 
(see \cite[Definition 1.1]{AlarconForstneric2023Oka1}).

%
%
\begin{definition}\label{def:Oka1} 
A complex manifold $X$ is an {\em Oka-1 manifold} if for any 
open Riemann surface $R$, Runge compact set $K$ in $R$, 
closed discrete set $A\subset R$, continuous map $f:R\to X$ 
which is holomorphic on a neighbourhood of $K\cup A$, 
number $\epsilon>0$, and function $k:A\to\N=\{1,2,\ldots\}$ 
there is a holomorphic map $F:R\to X$ which is homotopic to $f$ 
and satisfies
\begin{enumerate}
\item $\max_{p\in K} \dist_X(F(p),f(p)) < \epsilon$, and 
\item $F$ agrees with $f$ to order $k(a)$ at every point $a\in A$. 
\end{enumerate}
\end{definition}

Here and in the sequel, $\dist_X$ denotes a distance function on 
the manifold $X$ inducing the standard manifold topology. The properties 
under consideration will be independent of the particular choice of such 
a distance function.

Clearly, a complex manifold $X$ is Oka-1 if and only if every 
connected component of $X$ is such.
By \cite[Proposition 2.7]{AlarconForstneric2023Oka1}, the conditions in 
Definition \ref{def:Oka1} are equivalent to the Runge approximation 
condition for any special pair of domains $K\subset K'=K\cup D\subset R$,
where $D$ is a closed disc attached to $K$ along a boundary arc, 
with jet interpolation in finitely many points of $K$. Every Oka manifold 
(see \cite{Forstneric2023Indag,ForstnericLarusson2011}) 
is also an Oka-1 manifold; the converse is false in general,
at least for noncompact manifolds.

Here is a brief summary of the main results from 
\cite{AlarconForstneric2023Oka1}.
We denote by $\Hcal^p$ the $p$-dimensional Hausdorff measure 
on $X$ with respect to a fixed Riemannian metric. 

\begin{enumerate}[\rm (a)]
\item If a complex manifold $X$ of dimension $n$ is dominable 
by $\C^n$ at every point in the complement of a closed subset 
$E\subset X$ with $\Hcal^{2n-1}(E)=0$ (such $X$ is called 
{\em densely dominable} by $\C^n$), then $X$ is an Oka-1 manifold
(see \cite[Theorem 2.2 and Corollary 2.5]{AlarconForstneric2023Oka1}).
\item All Kummer surfaces, all elliptic K3 surfaces, 
and many elliptic surfaces of
Kodaira dimension $1$ are Oka-1 manifolds 
(see \cite[Sect.\ 8]{AlarconForstneric2023Oka1}). 
\item Several functorial properties of Oka-1 manifolds are described in 
\cite[Sect.\ 7]{AlarconForstneric2023Oka1}. 
\end{enumerate}

%
%

Alarc\'on and Forstneri\v c also introduced 
the following notion of an {\em Oka-1 map}, by analogy 
with the notion of an Oka map (for the latter, 
see \cite[Sect.\ 16]{Larusson2004} 
and \cite[Definition 7.4.7]{Forstneric2017E}). 

\begin{definition}[Definition 7.7 in \cite{AlarconForstneric2023Oka1}] 
\label{def:Okamap}
A holomorphic map $h:X\to Y$ of complex manifolds is an 
{\em Oka-1 map} if
\begin{enumerate}[\rm (i)] 
\item $h$ is a Serre fibration, and 
\item given an open Riemann surface $R$, a holomorphic map $g:R\to Y$,
and a continuous lifting $f_0:R\to X$ of $g$ (that is, $h\circ f_0=g$),
which is holomorphic on a neighbourhood of
a compact Runge subset $K\subset R$, we can deform $f_0$ through liftings
of $g$ to a holomorphic lifting $f:R\to X$ which approximates $f_0$ as closely
as desired on $K$ and agrees with $f_0$ to any given finite order at finitely
many given points of $K$. 
\end{enumerate}
\end{definition}

\vspace{-3mm}
\[
	\xymatrixcolsep{5pc}
   	 \xymatrix{  & X \ar[d]^{h} \\ R \ar[r]^{\ \ g} \ar@{-->}[ur]^{f} & Y}
\]

\vspace{2mm}

As noted in \cite{AlarconForstneric2023Oka1}, a complex manifold 
$X$ is an Oka-1 manifold 
if and only if the constant map $X\to point$ is an Oka-1 map. 
Every Oka map is also an Oka-1 map, but the converse fails,
at least for maps with noncompact fibres. 
An Oka-1 map $X\to Y$ to a connected complex manifold $Y$ is a surjective 
submersion and its fibres are Oka-1 manifolds 
\cite[Proposition 7.8]{AlarconForstneric2023Oka1}. 
Oka-1 maps are useful in finding new examples of Oka-1 
manifolds, as shown by the following results 
\cite[Theorem 7.6 and Corollary 7.9]{AlarconForstneric2023Oka1}.
(See \cite[Theorem 3.15]{Forstneric2023Indag} for the analogous
result for Oka manifolds and Oka maps.)

%
%
\begin{theorem} \label{th:updown}
Let $h:X\to Y$ be an Oka-1 map between connected complex manifolds.
\begin{enumerate}[\rm (a)]
\item If $Y$ is an Oka-1 manifold, then $X$ is an Oka-1 manifold.
\item If $X$ is an Oka-1 manifold and the homomorphism 
$h_*:\pi_1(X)\to \pi_1(Y)$ of fundamental groups is surjective, 
then $Y$ is an Oka-1 manifold.
\item If $h:X\to Y$ is a holomorphic fibre bundle with a connected Oka fibre, 
then $X$ is an Oka-1 manifold if and only if $Y$ is an Oka-1 manifold.
\end{enumerate}
\end{theorem}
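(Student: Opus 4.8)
The plan is to prove parts (a) and (b) directly from Definitions~\ref{def:Oka1} and~\ref{def:Okamap}, and then to deduce part (c). Three observations will be used repeatedly. First, an Oka-1 map $h\colon X\to Y$ between connected complex manifolds is a holomorphic submersion, and a holomorphic submersion admits local holomorphic sections and, more generally, lets one lift a sufficiently small holomorphic deformation of a holomorphically liftable map over a neighbourhood of a compact set in an open Riemann surface to a nearby continuous family of holomorphic lifts; this rests on the vanishing of $H^1$ of a holomorphic vector bundle over an open Riemann surface. Second, an Oka-1 map is a Serre fibration, hence has the relative homotopy lifting property with respect to inclusions of the form $(R\times\{0\})\cup(V\times[0,1])\hookrightarrow R\times[0,1]$ with $V$ a subcomplex of $R$. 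Third, by a standard induction over a normal exhaustion of $R$, of the type used to relate the various formulations of the Oka-1 condition in \cite{AlarconForstneric2023Oka1}, it suffices when applying Definition~\ref{def:Okamap}(ii) to treat the case where the interpolation set is finite.

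For part (a), starting from $f\colon R\to X$ and the data $K,A,\epsilon,k$, I would put $g=h\circ f\colon R\to Y$ (continuous, holomorphic near $K\cup A$) and apply Definition~\ref{def:Oka1}, using the Oka-1 hypothesis on $Y$, to obtain a holomorphic $G\colon R\to Y$ homotopic to $g$ through maps $g_t$, with $G$ being $\delta$-close to $g$ on $K$ (for a small $\delta$ fixed later) and agreeing with $g$ to order $k(a)$ at each $a\in A$; an elementary adjustment lets me take the maps $g_t$ holomorphic on a neighbourhood of $K\cup A$, $\delta$-close to $g$ there, and agreeing with $g$ to order $k(a)$ at each $a\in A$. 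Next I would manufacture a continuous lift $f_1\colon R\to X$ of $G$ that is holomorphic near $K\cup A$, close to $f$ on $K$, has the same $k(a)$-jet as $f$ at each $a\in A$, and is homotopic to $f$. Near $K\cup A$ this comes from lifting the family $g_t$, starting from the holomorphic lift $f$ of $g_0=g$, to a continuous family of holomorphic lifts $f_t$ of $g_t$ (by the submersion fact above; near each $a\in A$ one may take $f_t$ to agree with $f$ to order $k(a)$ at $a$, since $g_t$ agrees with $g$ to that order there). Globally it comes from extending the partial lift given by $f$ at time $0$ and by $f_t$ over a suitable closed neighbourhood $V$ of $K\cup A$ to a full lift $R\times[0,1]\to X$ of $g_t$, using the relative homotopy lifting property; the value of this lift at time $1$ is the desired $f_1$. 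Finally I would feed the holomorphic map $G$ and its continuous lift $f_1$ into Definition~\ref{def:Okamap}(ii), with $K\cup A$ playing the role of the Runge compactum, to obtain a holomorphic lift $F\colon R\to X$ of $G$ which is homotopic to $f_1$ — hence to $f$ — through liftings of $G$, as close to $f_1$ on $K$ as we please, and with the same $k(a)$-jet as $f_1$ at each $a\in A$. For $\delta$ and this final approximation small enough, $F$ is $\epsilon$-close to $f$ on $K$ and agrees with $f$ to order $k(a)$ at each $a\in A$, so $X$ is Oka-1.

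For part (b), given $g\colon R\to Y$ and the data $K,A,\epsilon,k$, I would first lift $g$ to a continuous map $f\colon R\to X$: a noncompact Riemann surface is homotopy equivalent to a CW complex of dimension at most $1$, and since $h_*\colon\pi_1(X)\to\pi_1(Y)$ is surjective (so that, $X$ and $Y$ being connected, the fibre of $h$ is connected) the only obstruction to lifting $g$ through the fibration $h$ vanishes. Using that $h$ is a holomorphic submersion near the points of $A$ and applying Definition~\ref{def:Okamap}(ii) over a relatively compact Stein neighbourhood of $K$ — then gluing by the relative homotopy lifting property — I can take $f$ holomorphic on a neighbourhood of $K\cup A$ while keeping $h\circ f=g$. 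Applying the Oka-1 hypothesis on $X$ to $f$ with a small $\delta$ gives a holomorphic $\tilde f\colon R\to X$ homotopic to $f$, $\delta$-close to $f$ on $K$, and agreeing with $f$ to order $k(a)$ at each $a\in A$. Then $G:=h\circ\tilde f\colon R\to Y$ is holomorphic, homotopic to $h\circ f=g$, and — as $h$ is Lipschitz on the compact set $f(K)$ and composition with the holomorphic map $h$ preserves jet agreement — $\epsilon$-close to $g$ on $K$ (for $\delta$ small) and in agreement with $g$ to order $k(a)$ at each $a\in A$. Thus $Y$ is Oka-1.

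For part (c), a holomorphic fibre bundle whose fibre is an Oka manifold is an Oka map (see \cite{Forstneric2017E}), and every Oka map is an Oka-1 map, so $h$ is an Oka-1 map; hence part (a) gives that $X$ is Oka-1 whenever $Y$ is. Conversely, if $X$ is Oka-1, then connectedness of the fibre $F$ together with the homotopy exact sequence $\pi_1(X)\to\pi_1(Y)\to\pi_0(F)=\{*\}$ of the fibration $h$ shows that $h_*$ is surjective, so part (b) gives that $Y$ is Oka-1. I expect the main obstacle to lie in the construction of the intermediate lift $f_1$ in part (a): it must be kept in the correct homotopy class relative to $f$ — which is what draws the homotopy $g_t$ and the relative homotopy lifting of the Serre fibration $h$ into the argument — and simultaneously be holomorphic near $K\cup A$ with control of its jets at $A$ and its size on $K$, which is what brings in the submersion structure of $h$. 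The corresponding holomorphization of the lift near $K$ in part (b) is the same difficulty in a milder form, and with (a) and (b) in hand part (c) is essentially formal.
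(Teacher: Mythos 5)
This theorem is not proved in the paper under review: it is quoted verbatim from \cite[Theorem 7.6 and Corollary 7.9]{AlarconForstneric2023Oka1}, so I am comparing your argument with the standard proof given there. Your overall architecture is the right one and matches it: in (a) push the data down by $h$, approximate in $Y$, manufacture a controlled continuous lift of the resulting holomorphic map, and finish with Definition~\ref{def:Okamap}(ii); in (b) lift continuously using $\dim R\le 1$ and the surjectivity of $h_*$, holomorphize the lift near $K\cup A$, apply the Oka-1 property of $X$, and push down; part (c) then follows since a fibre bundle with Oka fibre is an Oka map, together with the exact sequence of the fibration. Parts (b) and (c) are essentially fine.

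There is, however, a genuine gap in part (a), precisely at the step you dismiss as ``an elementary adjustment.'' Definition~\ref{def:Oka1} only asserts that $G$ is homotopic to $g$; it gives no control whatsoever on the homotopy. To build the intermediate lift $f_1$ you need a \emph{single} global homotopy $g_t$ from $g$ to $G$ whose restriction to a neighbourhood of $K\cup A$ consists of holomorphic maps uniformly close to $g$ (so that it can be lifted holomorphically there via a fibred retraction, compatibly with the global continuous lift supplied by the relative homotopy lifting property). Near $K$ one can indeed construct the canonical ``short'' holomorphic homotopy $\gamma_t$ from $g$ to $G$ (Stein neighbourhood of the graph, linear interpolation, retraction), and it is essentially unique up to homotopy. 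But whether $\gamma_t$ extends to a global homotopy from $g$ to $G$ is an extension problem over the relative $1$-cells of $(R,\overline V)$: for each handle $e$ attached to $\overline V$ one must kill a loop in $Y$ built from $g|_e$, $G|_e$, and the traces $t\mapsto\gamma_t(p)$ at the endpoints of $e$, i.e.\ a $\pi_1(Y)$-valued obstruction measuring the discrepancy between $\gamma_t$ and the restriction of the uncontrolled homotopy provided by Definition~\ref{def:Oka1}. Nothing in the definition rules this obstruction out, and merely knowing $g\simeq G$ does not let you retrofit the control. The correct fix is not to adjust a given homotopy but to invoke the equivalent \emph{controlled} formulation of the Oka-1 property (\cite[Proposition 2.7]{AlarconForstneric2023Oka1} and the inductive scheme behind it), which produces $G$ \emph{together with} a homotopy $g_t$ that is holomorphic near $K$, $\epsilon$-close to $g$ on $K$, and fixes the $k(a)$-jets at $A$ for every $t$; the control comes for free from the step-by-step construction, as a concatenation of short homotopies. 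With that input your construction of $f_1$ and the final application of Definition~\ref{def:Okamap}(ii) go through as you describe. You correctly identified this as the main obstacle in your closing paragraph, but the resolution you offer does not actually close it.
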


Our first main result provides a large class of Oka-1 manifolds.
The notion of {\em density property} is recalled in Sect.\ \ref{sec:domains}, and Theorem \ref{th:domains} establishes a stronger property of this class.

\begin{theorem}
Let $X$ be a Stein manifold with the density property and $\rho:X\to\R$ be a $\mathscr C^2$ Morse exhaustion function whose Levi form has at least two positive eigenvalues at each point.  Then every superlevel set of $\rho$ is an Oka-1 manifold.
\end{theorem}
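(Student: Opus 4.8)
The plan is to establish the Oka-1 property of $\Omega:=\{\rho>c\}$ in the equivalent form of a Runge approximation theorem for maps of open Riemann surfaces, with the density property of $X$ supplying the holomorphic sprays that run the Oka principle and the Levi hypothesis on $\rho$ keeping those sprays inside $\Omega$. We may assume $\Omega$ is connected. By \cite[Proposition 2.7]{AlarconForstneric2023Oka1} it suffices to treat a special pair $K\subset K'=K\cup D$ in an open Riemann surface $R$, with $D$ a closed disc attached to $K$ along a boundary arc: given a continuous map $f\colon R\to\Omega$ that is holomorphic near $K$, finitely many jet conditions at points of $K$, and $\epsilon>0$, we must produce a holomorphic map on a neighbourhood of $K'$, with values in $\Omega$, that is $\epsilon$-close to $f$ on $K$, matches the jets, and is homotopic to $f|_{K'}$ through maps into $\Omega$; iterating over a normal exhaustion of $R$ by such pairs, with geometric control of the successive approximations, then gives the theorem.

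Since $X$ is Stein with the density property it is an Oka manifold \cite{Forstneric2017E}, so the proof can follow the classical Gromov--Oka scheme for the Cartan pair $(K,D)$: make $f$ holomorphic on a neighbourhood of $D$ while keeping its values in $\Omega$, equip the resulting map with a dominating holomorphic spray, and glue the holomorphic sections over $K$ and over $D$ by solving a linear $\overline\partial$-problem on $R$ --- equivalently a convex approximation problem along the arc where $D$ meets $K$ --- all carried out so that values remain in $\Omega$ and the homotopy class rel the prescribed jets is preserved. The one step that is not available off the shelf, because $\Omega$ need not be an Oka manifold, is the existence of dominating holomorphic sprays whose values lie in $\Omega$; producing these is where the hypotheses enter, and is treated next.

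This is where the density property and the Levi hypothesis are used together. Over a relatively compact part of $R$, the density property provides finitely many complete holomorphic vector fields on $X$ spanning $T_xX$ at each point $x$ in a neighbourhood of the (compact) image of the core; composing their complex-time flows gives a dominating spray $X\times\C^N\to X$ with core the identity, hence by pullback a dominating spray with the prescribed core into $\Omega$ --- but one whose values escape $\Omega$. To push it back into $\Omega$, fix a point $x$ with $\rho(x)$ near $c$ and diagonalise the Levi form of $\rho$ at $x$; the ``radial'' holomorphic flow in the directions of the at least two positive eigenvalues increases $\rho$ exponentially away from a complex subspace of codimension at least two (the common zero locus of the corresponding coordinates), so it maps $\Omega$ deeper into $\Omega$ off that degeneracy locus. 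The codimension being at least two --- this is precisely the role of the ``two'', matched to the one-dimensionality of the sources --- the image of a Riemann surface can be kept off the degeneracy locus. By the Anders\'en--Lempert theory furnished by the density property \cite{Forstneric2017E}, these local flows are realised globally by compositions of flows of complete fields of $X$, chosen close to the identity over $K$ and near the marked points, and grafted onto the spray; the result is a dominating holomorphic spray with the prescribed core and all values in $\Omega$. (This is, in effect, the property recorded in Theorem~\ref{th:domains}, from which the Oka-1 conclusion follows formally.)

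I expect the crux to be this last construction: turning the heuristic ``$\Omega$ is pseudoconcave along $\{\rho=c\}$, hence swept out by holomorphic curves escaping $\{\rho\le c\}$'' into an honest dominating spray \emph{with values in $\Omega$} that respects the given core and jets is where the two positive eigenvalues and the automorphism group of $X$ must genuinely interact, and where the content of the theorem lies. The remaining ingredients are routine: the Morse hypothesis enters through the local normal form of $\rho$ near its critical points, used to make the spray construction uniform over all of $\Omega$ --- hence valid for every value $c$ --- and the convergence of the iteration over an exhaustion of $R$ to a holomorphic map homotopic to $f$ is handled as in the standard proofs of Oka-type approximation theorems.
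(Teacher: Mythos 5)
Your overall frame (reduce via \cite[Proposition 2.7]{AlarconForstneric2023Oka1} to a Runge-type approximation for a special pair, then iterate over an exhaustion of $R$) matches the paper, but the engine you propose for the key step does not work, and that step is where the content of the theorem lies --- as you yourself concede. The gap is the construction of dominating holomorphic sprays with values in $\Omega=\{\rho>c\}$. First, the Levi form of $\rho$ having two positive eigenvalues at a point yields no ``radial holomorphic flow increasing $\rho$ exponentially'': the Levi form is only the complex-Hessian part of the second-order jet of $\rho$, and along a holomorphic curve the pluriharmonic part of the Hessian and the gradient term generically dominate, so there is no flow, local or global, mapping $\Omega$ into itself off a thin set. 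What 1-convexity actually gives is a small holomorphic disc through each point of $\{\rho=c\}$, tangent to the level set and otherwise contained in $\{\rho>c\}$ --- a pointwise, second-order statement, not a flow. Second, Anders\'en--Lempert theory approximates isotopies of biholomorphisms between \emph{Runge pseudoconvex} domains by automorphisms of $X$; the superlevel set $\Omega$ is pseudoconcave from inside and typically not Runge, so there is no mechanism for ``realising these local flows globally'' while preserving $\Omega$. A spray with values in $\Omega$ dominating along the source curve is essentially what one would need to prove $\Omega$ is Oka (or at least LSAP), which is far stronger than Oka-1 and is not known for these domains.

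The paper circumvents this entirely by combining two black boxes. (i) The two positive eigenvalues enter through \cite[Theorem 1.1]{DrinovecForstneric2007DMJ}, which replaces $f|_{\bar D}$ by a holomorphic map $\tilde f:\bar D\to X$ approximating $f$ on $K$, matching the jets, and pushing the boundary $\tilde f(bD)$ up into a higher superlevel set $X_{c_1}$ with $c_1>c_0$; the same hypothesis bounds the Morse indices of $\rho$ by $2\dim_{\C}X-2$, so $\pi_1(X_c,X_{c'})=0$, which is what allows $\tilde f$ to be extended continuously to all of $R$ with values in $X_{c_1}$ outside $D$. (ii) The density property is used only through Kusakabe's theorem \cite[Theorem 1.2]{Kusakabe2024AM} that $O=X\setminus L$ is an Oka manifold for every compact $\Oscr(X)$-convex set $L$; choosing $L$ with $X_{c_1}\subset O\subset X_{c_0}$, the ordinary Oka principle for maps $R\to O$ then yields the holomorphic map $F:R\to X$ with $F(R\setminus\mathring K)\subset X_{c_0}$. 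Neither ingredient requires building anything adapted to the pseudoconcave geometry of $\Omega$. As written, your proposal leaves its central construction unproved and supports it with an incorrect heuristic, so it does not establish the theorem.
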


By analogy with the algebraic version of the Oka property 
(see L\'arusson and Truong \cite{LarussonTruong2019}), 
it is natural to formulate and study the following algebraic version 
of the Oka-1 property.

%
%
\begin{definition}\label{def:aOka1} 
A complex algebraic manifold $X$ is {\em algebraically Oka-1}
(abbreviated {\em aOka-1}) if for any affine algebraic Riemann surface $R$, 
Runge compact set $K$ in $R$, finite set $A\subset K$, 
continuous map $f:R\to X$ which is holomorphic on a neighbourhood of $K$, 
number $\epsilon>0$, and integer $k\in\N$ there is a morphism $F:R\to X$ 
which is homotopic to $f$ and satisfies
\begin{enumerate}
\item $\max_{p\in K} \dist_X(F(p),f(p)) < \epsilon$, and 
\item $F$ agrees with $f$ to order $k$ at every point of $A$.
\end{enumerate}
\end{definition}

Recall that every affine algebraic Riemann surface $R$ is the complement
of finitely many points in a compact Riemann surface (and vice versa) and admits a regular embedding $R\hra \C^3$ onto a closed smooth affine algebraic curve in $\C^3$.

Algebraic Oka properties for morphisms from affine algebraic varieties 
of arbitrary dimension to algebraic manifolds 
were introduced and studied by L\'arusson and Truong \cite{LarussonTruong2019}.
They showed in particular that no algebraic manifold which is compact 
or contains a rational curve satisfies 
the algebraic version of the basic Oka property, the approximation property, 
or the interpolation property \cite[Theorem 2]{LarussonTruong2019}. 
The most useful known algebraic Oka property 
is the relative Oka principle for morphisms from affine algebraic 
varieties to algebraically subelliptic manifolds; 
see \cite[Definition 6.1]{Forstneric2023Indag} for this notion, due to Gromov
\cite{Gromov1989}, and \cite[Theorem 3.1]{Forstneric2006AJM}
and the more precise version in \cite[Theorem 6.4]{Forstneric2023Indag}
for the relevant result. It was recently shown by Kaliman and Zaidenberg 
\cite[Theorem 1.1]{KalimanZaidenberg2024FM} 
that every algebraically subelliptic 
manifold is in fact algebraically elliptic, and this property is 
equivalent to several other algebraic Oka properties. 
(See also \cite[Theorem 6.2 and Corollary 6.6]{Forstneric2023Indag}.)  

The negative results of L\'arusson and Truong, mentioned above,
require sources of dimension at least two.  Restricting to one-dimensional 
sources allows positive results to be proved.  Our results in Sections 
\ref{sec:bimeromorphic} and \ref{sec:AE} imply the 
following second main theorem of this paper. Its proof uses a multi-parameter
version of the aforementioned homotopy approximation theorem 
\cite[Theorem 3.1]{Forstneric2006AJM}; see Theorem \ref{th:AHAP}.

\begin{theorem}\label{th:summary-aOka1}
A projective manifold that is birationally equivalent to an algebraically 
elliptic projective manifold is aOka-1.
\end{theorem}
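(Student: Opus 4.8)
The plan is to reduce the statement to two previously available ingredients: the invariance of aOka-1 under birational equivalence of projective manifolds (established in Section~\ref{sec:bimeromorphic}), and the fact that an algebraically elliptic projective manifold is aOka-1 (the content of Section~\ref{sec:AE}, which in turn rests on the multi-parameter homotopy approximation Theorem~\ref{th:AHAP}). Given these, the proof is essentially a one-line deduction: if $Y$ is a projective manifold birationally equivalent to an algebraically elliptic projective manifold $X$, then $X$ is aOka-1 by the second ingredient, and aOka-1 transfers from $X$ to $Y$ by the first.

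First I would recall the statement of the birational invariance result for compact algebraic (in particular projective) manifolds, and note that birational equivalence between smooth projective varieties can be realised through a common resolution or, more elementarily, through a sequence of blow-ups and blow-downs along smooth centres (weak factorisation); whichever formulation the Section~\ref{sec:bimeromorphic} result uses, the key point is that the aOka-1 property is insensitive to modifications along subvarieties of codimension at least two. So the structural content is that a morphism from an affine algebraic Riemann surface $R$ into $Y$ can be lifted, after a generic small perturbation, to a morphism into $X$ (or a common blow-up), because a curve meets a codimension-$\ge 2$ locus in at most finitely many points and those can be avoided or pushed off by the interpolation slack; then one applies aOka-1 on $X$ and pushes the result back down to $Y$.

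Second I would invoke the algebraic ellipticity input: by \cite[Theorem 3.1]{Forstneric2006AJM} in its multi-parameter form (Theorem~\ref{th:AHAP}), morphisms from affine algebraic curves into an algebraically elliptic manifold satisfy the Runge approximation and jet interpolation properties with homotopy, which is exactly the aOka-1 condition of Definition~\ref{def:aOka1}. Here one should be careful that Definition~\ref{def:aOka1} requires the interpolation set $A$ to be finite and contained in the Runge compact $K$, and that the approximating $F$ be homotopic to $f$; the cited homotopy approximation theorem delivers precisely a path of maps through which $f$ is deformed to the holomorphic $F$, so the homotopy condition is automatic, and the interpolation in finitely many points of $K$ is part of the statement of Theorem~\ref{th:AHAP}.

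I do not expect a serious obstacle here, since both halves are doing the real work earlier in the paper; the only point requiring care is bookkeeping at the interface — making sure the version of birational invariance proved in Section~\ref{sec:bimeromorphic} applies to \emph{projective} (not merely compact Kähler or abstract algebraic) manifolds and in the direction needed, and that "algebraically elliptic" in the hypothesis matches the hypothesis under which Section~\ref{sec:AE} proves aOka-1 (as opposed to "algebraically subelliptic", though by Kaliman--Zaidenberg \cite[Theorem 1.1]{KalimanZaidenberg2024FM} these coincide). If those citations line up, the theorem follows immediately by combining them, and the proof in the paper should be just a couple of sentences with pointers to the two sections.
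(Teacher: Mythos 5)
Your proposal is correct and is exactly the paper's argument: Theorem~\ref{th:summary-aOka1} is obtained by combining Corollary~\ref{cor:birational} (birational invariance of aOka-1 for compact algebraic manifolds, via blow-ups and weak factorisation) with Theorem~\ref{th:AE} (algebraically elliptic projective manifolds are aOka-1, using simple connectivity via unirationality and Serre). The only minor inaccuracy is your remark that the jet interpolation "is part of the statement of Theorem~\ref{th:AHAP}" --- in the paper the interpolation in Theorem~\ref{th:AE} is actually extracted by a degree argument on a universal deformation family rather than from the interpolation clause of Theorem~\ref{th:AHAP} --- but this is internal to Section~\ref{sec:AE} and does not affect your deduction.
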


Recall that a connected complex manifold $X$ is said to be 
{\em rationally connected} if any pair of points in $X$ can be 
joined by a rational curve $\P^1\to X$. 
Among many references for rationally connected projective 
manifolds, we refer to the papers by Koll\'ar et al.\ 
\cite{Kollar1991,KollarMiyaokaMori1992} and the monographs by 
Koll\'ar \cite{Kollar1995E} and Debarre \cite{Debarre2001}.
It is shown in \cite[Theorem 2.1]{KollarMiyaokaMori1992}
that several possible definitions of this class coincide.
In particular, if every sufficiently general pair of points in $X$ 
can be connected by an irreducible rational curve, 
or by a chain of such curves, then $X$ is rationally connected.
Also, if there is a morphism $f:\P^1\to X$ such that $f^* TX\to \P^1$ 
is an ample bundle, then $X$ is rationally connected. 
The class of projective rationally connected manifolds is 
birationally invariant. We have the following observation.

%
%
\begin{proposition}\label{prop:aOka1}
Every projective aOka-1 manifold is rationally connected.
\end{proposition}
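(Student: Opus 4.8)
The plan is to exploit the interpolation aspect of the aOka-1 property to produce, for any two points of $X$, a morphism from an affine algebraic curve through both of them, and then to compactify this curve and use the structure theory of rationally connected manifolds. First I would fix two points $x_0, x_1 \in X$ and choose an affine algebraic Riemann surface $R$ together with two distinct points $a_0, a_1 \in R$; the simplest choice is $R = \C$ with $a_0 = 0$, $a_1 = 1$. I would then build a continuous map $f : R \to X$ with $f(a_0) = x_0$ and $f(a_1) = x_1$ which is holomorphic near a Runge compact set $K \supset \{a_0, a_1\}$ (for instance, $f$ holomorphic near a disc containing $a_0$ and $a_1$, extended continuously but arbitrarily elsewhere — possible since $X$ is connected, hence path connected). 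Applying Definition \ref{def:aOka1} with $A = \{a_0, a_1\}$ and interpolation order $k = 1$ yields a morphism $F : \C \to X$ with $F(0) = x_0$ and $F(1) = x_1$.

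Next I would pass to the compactification. The morphism $F : \C \to X$ extends to a rational map $\overline{F} : \P^1 \dashrightarrow X$ defined on all of $\P^1$ except possibly the point at infinity; since $X$ is projective, $\overline{F}$ extends to a genuine morphism $\P^1 \to X$ after resolving the single point of indeterminacy — but on a smooth curve a rational map to a projective variety is automatically a morphism, so in fact $\overline{F} : \P^1 \to X$ is already defined everywhere. Its image is an irreducible rational curve in $X$ passing through $x_0$ and $x_1$. This shows that any two points of $X$ lie on an irreducible rational curve, which is precisely the definition of rational connectedness recalled just before the proposition.

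Actually one should be slightly careful: the image $\overline{F}(\P^1)$ might be a point if $F$ were constant, but $F(0) \neq F(1)$ rules this out, so the curve is genuinely one-dimensional and nonconstant. Thus $X$ is rationally connected in the strong pointwise sense, which is even more than the "sufficiently general pair of points" version cited from \cite[Theorem 2.1]{KollarMiyaokaMori1992}.

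The only real subtlety — and the step I expect to be the main obstacle — is verifying that one may legitimately apply the aOka-1 definition here: one must produce a continuous map $f : R \to X$, holomorphic near a suitable Runge compact set $K$ containing both interpolation points, with the prescribed values at $a_0$ and $a_1$, and homotopic to whatever one ends up with (the homotopy condition is automatic since any map from $\C$ is null-homotopic and $X$ is connected). Taking $K$ to be a closed disc and $f$ a constant (say $\equiv x_0$) near $K$ would fail to hit $x_1$; instead one takes $K$ to be a closed disc containing $0$ and $1$, lets $f$ be, near $K$, an affine-linear-in-coordinates holomorphic map of a neighbourhood into a coordinate chart of $X$ joining $x_0$ to $x_1$ (possible since the two points can be joined by a holomorphic disc when they lie in a common chart; in general one chains finitely many charts along a path and uses that the disc is contractible, extending holomorphically near $K$ and merely continuously outside). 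Once $f$ is in hand, Definition \ref{def:aOka1} does the rest, and the compactification argument is routine.
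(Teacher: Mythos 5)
Your proof is correct and follows essentially the same route as the paper's: apply the aOka-1 interpolation condition (with $A$ consisting of two points) to an initial continuous map that is holomorphic near a Runge compact set and hits both given points, obtain a morphism $\C\to X$ through them, and extend it to $\P^1$ using projectivity of $X$. The only cosmetic difference is that the paper starts from a single holomorphic disc through both points rather than assembling the initial map from coordinate charts.
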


\begin{proof}
We may assume that the projective manifold $X$ under consideration 
is connected. Take any pair of points $p,q\in X$. Let $\D=\{z\in\C:|z|<1\}$.
There is a holomorphic disc $f_0:\D\to X$ with 
$f_0(a)=p$ and $f_0(b)=q$ for some $a,b\in\D$. Assuming that $X$ is 
aOka-1, there is a morphism $f:\C\to X$ with $f(a)=p$ and $f(b)=q$.
Since $X$ is projective, $f$ extends to a morphism $f:\P^1\to X$
with $p,q\in f(\P^1)$. This shows that $X$ is rationally connected. 
\end{proof}

Conversely, it is conjectured that every projective rationally connected 
manifold is Oka-1 \cite[Conjecture 9.1]{AlarconForstneric2023Oka1}.
As explained there, this would follow from a theorem of Gournay 
\cite[Theorem 1.1.1]{Gournay2012}, but the authors of 
\cite{AlarconForstneric2023Oka1} could not understand the details
of the proof of this result. Gournay's theorem would also imply that every 
such manifold is an aOka-1 manifold. 
An Oka-1 type property of holomorphic maps $\C\to X$ to any 
rationally connected projective manifold $X$ was established 
by Campana and Winkelmann \cite{CampanaWinkelmann2023}, 
who constructed holomorphic lines $\C\to X$ with given jets through 
any given sequence of points in $X$. 
Their proof is based on the comb smoothing theorem by 
Koll\'ar et al.\ \cite{KollarMiyaokaMori1992}.

In the present paper, we use different techniques to establish
the aOka-1 property for two subclasses of the class of rationally
connected projective manifolds: for rational manifolds 
(see Corollary \ref{cor:rational}), and for algebraically elliptic manifolds 
(see Theorem \ref{th:AE}). Since the aOka-1 property is a birational invariant
(see Corollary \ref{cor:birational}), these two results are summarised 
in Theorem \ref{th:summary-aOka1}. The theorem
implies the following extension of Royden's Runge approximation theorem 
\cite[Theorem 10]{Royden1967JAM} 
for maps from compact Riemann surfaces to the Riemann sphere 
$\P^1$ to a much wider class of targets; in particular, to any projective
space $\P^n$. The corollary is proved in Section \ref{sec:AE}.

%
%
\begin{corollary}  \label{cor:Royden}
Let $X$ be a projective manifold birationally equivalent to an algebraically elliptic projective manifold.  Given a compact Riemann surface $\Sigma$, a compact subset $K$ of $\Sigma$, a holomorphic map $f:U\to X$ from an open neighbourhood of $K$ in $\Sigma$,
and a finite subset $A$ of $K$, there is a morphism $\Sigma\to X$ that approximates $f$ arbitrarily closely on $K$ and agrees with $f$ to any 
given finite order in the points of $A$.
\end{corollary}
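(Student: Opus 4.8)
The plan is to derive Corollary \ref{cor:Royden} directly from Theorem \ref{th:summary-aOka1} by reducing the compact Riemann surface $\Sigma$ to an affine algebraic curve, which is where the aOka-1 property applies. The first step is to remove from $\Sigma$ a single point $q$ not contained in $K$ and not in the neighbourhood $U$ on which $f$ is defined (shrinking $U$ if necessary so that $\overline{U}\neq\Sigma$; such a point exists since $K$ is compact). Then $R=\Sigma\setminus\{q\}$ is an affine algebraic Riemann surface, and $K$ is a compact subset of $R$. I would next arrange that $K$ is Runge in $R$: in general it need not be, but its $\Ocal(R)$-convex (equivalently, polynomially convex after an embedding $R\hookrightarrow\C^3$) hull $\widehat{K}$ is a compact Runge set, and since $\Sigma\setminus K$ has no relatively compact components other than possibly some "holes" that get filled in, one checks that $f$ extends holomorphically across the bounded complementary components to a map defined near a Runge compact set; alternatively, one simply replaces $K$ by a slightly larger Runge compact neighbourhood $K'\subset U$ of $\widehat{K}\cap U$ and notes that the hypotheses are preserved. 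The cleanest route is: choose $q\in\Sigma\setminus\overline{U}$, set $R=\Sigma\setminus\{q\}$, and let $K'$ be a compact Runge subset of $R$ with $K\subset \mathrm{int}\,K'\subset K'\subset U$ (possible because the $\Ocal(R)$-hull of $K$ is contained in $U$ after possibly enlarging $U$ within $\Sigma\setminus\{q\}$, using that $R$ is noncompact so hulls are compact).

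With this setup in place, the second step is a routine application of the definition of aOka-1. Since $X$ is birationally equivalent to an algebraically elliptic projective manifold, Theorem \ref{th:summary-aOka1} gives that $X$ is aOka-1. We feed into Definition \ref{def:aOka1} the affine algebraic Riemann surface $R=\Sigma\setminus\{q\}$, the Runge compact set $K'$, the finite set $A\subset K\subset K'$, the map $f$ (which is holomorphic on the neighbourhood $U\supset K'$ of $K'$ — we need a \emph{continuous} map on all of $R$ that is holomorphic near $K'$; extend $f|_{K'}$ to a continuous map $R\to X$ using that $X$ is a manifold, hence an ANR, and that $R$ deformation retracts onto a neighbourhood of $K'$ after suitably choosing things, or just note any continuous extension exists since $R$ is a manifold and $X$ is an absolute neighbourhood retract), the given $\epsilon>0$, and the given order $k$. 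The definition then produces a morphism $F:R\to X$ homotopic to the extension, with $\max_{p\in K'}\dist_X(F(p),f(p))<\epsilon$ and $F$ agreeing with $f$ to order $k$ at every point of $A$. In particular the approximation holds on $K\subset K'$.

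The third step is to promote the morphism $F:R=\Sigma\setminus\{q\}\to X$ to a morphism $\Sigma\to X$. Here I use that $X$ is projective, hence (by properness / valuative criterion, or concretely by embedding $X\hookrightarrow\P^N$ and using that a rational map from a smooth curve to projective space is automatically a morphism) the morphism $F$ extends across the single point $q$ to a morphism $\Sigma\to X$, which I still denote $F$. This extension does not affect the behaviour of $F$ on $K$ or at the points of $A$, so $F$ approximates $f$ to within $\epsilon$ on $K$ and agrees with $f$ to order $k$ at the points of $A$, as required.

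The main obstacle is the bookkeeping in the first step, ensuring that $K$ (or a slightly enlarged compact set still inside $U$) is genuinely Runge in the affine curve $R$, and that the continuous extension of $f$ over $R$ exists and lies in the right homotopy class so that Definition \ref{def:aOka1} applies cleanly. Everything else — invoking aOka-1 via Theorem \ref{th:summary-aOka1}, and extending a morphism from a smooth affine curve into a projective manifold across a single puncture — is standard. One should also remark, for completeness, that the topological hypothesis is vacuous here because any two continuous maps from a curve into a rationally connected (indeed projective aOka-1, by Proposition \ref{prop:aOka1}) manifold can be handled, and in any case Definition \ref{def:aOka1} only asks for $F$ homotopic to the given $f$, which is exactly what we get.
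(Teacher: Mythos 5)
Your overall strategy --- puncture $\Sigma$ to obtain an affine curve $R$, apply the aOka-1 property of $X$ (Theorem \ref{th:summary-aOka1}) there, and extend the resulting morphism back across the punctures using projectivity of $X$ --- is the same as the paper's, and your second and third steps are essentially correct. The genuine gap is in the first step: removing a \emph{single} point $q\in\Sigma\setminus\overline U$ does not make $K$ (or any compact neighbourhood of $K$ inside $U$) Runge in $R=\Sigma\setminus\{q\}$ when $\Sigma\setminus K$ is disconnected. The $\Oscr(R)$-hull of $K$ is $K$ together with every component of $\Sigma\setminus K$ not containing $q$, and none of your three proposed remedies works: (1) $f$ does \emph{not} in general extend holomorphically across such a component --- there is no Hartogs phenomenon for maps from Riemann surfaces (e.g.\ $z\mapsto e^{1/z}$ on an annulus around $|z|=1$, viewed as a map into $\P^1$, does not extend across the inner disc); (2) a compact neighbourhood of $\widehat K\cap U$ inside $U$ has the same holes as $K$ and is still not Runge; (3) the hull of $K$ in $R$ depends only on $K$ and $R$, not on $U$, and ``enlarging $U$'' is not available since $f$ is only given on the original $U$ --- a concrete counterexample to your parenthetical claim is $\Sigma=\P^1$, $K=\{1/2\le|z|\le 2\}$, $U$ a slightly larger annulus, $q=\infty$: the hull of $K$ in $R=\C$ is the full disc $\{|z|\le 2\}$, which is not contained in $U$.

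The repair is what the paper does: first enlarge $K$ within $U$ to a compact set with smooth boundary, so that $\Sigma\setminus K$ has only finitely many components, and then remove one point from \emph{each} of these components, obtaining a finite set $P$ and an affine curve $R=\Sigma\setminus P$ in which $K$ is $\Oscr(R)$-convex (every component of $R\setminus K$ fails to be relatively compact in $R$ because it loses a point of its closure). The morphism produced by the aOka-1 property on $R$ then extends across the finitely many punctures to a morphism $\Sigma\to X$ exactly as in your third step. A minor further point: your justification of the continuous extension of $f$ to all of $R$ via the ANR property alone is insufficient (being an ANR only yields extensions to a neighbourhood of a closed set); what is actually needed is that $(R,K')$ has the homotopy type of a relative CW pair of dimension at most $1$, combined with path-connectedness (the paper uses simple connectedness) of $X$, so that $f$ extends over the $1$-cells.
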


There are many examples of projective Oka-1 manifolds that are not rationally 
connected, and hence not aOka-1 in view of Proposition \ref{prop:aOka1}. 
The simplest ones are elliptic curves. On the other hand, the following result is easily established.

\begin{proposition}
Every aOka-1 manifold is Oka-1.
\end{proposition}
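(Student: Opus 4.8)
The plan is to deduce the Oka-1 property from the aOka-1 property by a standard exhaustion-plus-gluing argument, reducing the analytic interpolation/approximation problem over an arbitrary open Riemann surface to the algebraic one over affine algebraic curves. First I would recall that every open Riemann surface $R$ is exhausted by an increasing sequence of smoothly bounded Runge domains $R_1\Subset R_2\Subset\cdots$ with $\bigcup_j R_j=R$, and that each such domain embeds properly and holomorphically into some $\C^N$; moreover, after a small perturbation (or by the classical embedding theory of open Riemann surfaces), the relevant compact pieces sit inside affine algebraic curves to which the aOka-1 hypothesis applies. The key point is that Definition~\ref{def:aOka1} only asks for approximation and finite-order jet interpolation on a single Runge compact $K$ with a \emph{finite} interpolation set $A\subset K$, whereas Definition~\ref{def:Oka1} allows a closed \emph{discrete} interpolation set $A\subset R$; the discrete set is handled by the exhaustion, since on each compact piece only finitely many points of $A$ are involved.

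The key steps, in order, are as follows. (1) Given the data $R, K, A, f, \epsilon, k$ of Definition~\ref{def:Oka1}, choose the exhaustion $R_1\Subset R_2\Subset\cdots$ so that $K\subset R_1$, each $\overline{R_j}$ is Runge in $R$, and $A\cap \overline{R_j}$ is finite for every $j$; arrange also that $\overline{R_{j+1}}\setminus R_j$ contains no points of $A$. (2) Construct inductively a sequence of maps $f_j$, holomorphic on a neighbourhood of $\overline{R_j}$, with $f_0$ the given $f$, such that $f_{j}$ is a morphism when restricted to (a neighbourhood of) $\overline{R_j}$ viewed inside an affine algebraic curve, $f_j$ approximates $f_{j-1}$ to within $\epsilon 2^{-j}$ on $\overline{R_{j-1}}$, $f_j$ agrees with $f_{j-1}$ to order $k(a)$ at each $a\in A\cap\overline{R_{j-1}}$ (so no further changes are made at already-treated interpolation points — one raises the interpolation order on the new points of $A\cap\overline{R_j}$ at the $j$th stage), and $f_j$ is homotopic to $f_{j-1}$. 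Each inductive step is exactly an application of the aOka-1 hypothesis to the affine algebraic curve containing a Runge compact neighbourhood of $\overline{R_j}$, with interpolation set $A\cap\overline{R_j}$ (finite) and interpolation order $\max\{k(a):a\in A\cap\overline{R_j}\}$. (3) The sequence $f_j$ converges uniformly on compacta of $R$ to a holomorphic map $F:R\to X$; the summability of the error bounds gives $\max_{p\in K}\dist_X(F(p),f(p))<\epsilon$, and because the jet at each $a\in A$ is frozen from some stage on, $F$ agrees with $f$ to order $k(a)$ there. (4) Homotopy: each $f_j\simeq f_{j-1}$ rel nothing in particular, and a telescoping/diagonal argument shows $F\simeq f$; alternatively one invokes the fact that a $\mathscr C^0$-small perturbation of a continuous map into a manifold is homotopic to it, combined with the homotopies produced at each stage.

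The main obstacle is step (2): ensuring that the aOka-1 hypothesis is actually applicable at each stage. The hypothesis concerns \emph{affine algebraic} Riemann surfaces and genuine \emph{morphisms}, while a priori the domains $R_j$ in an arbitrary open Riemann surface carry only a complex-analytic structure and the previous map $f_{j-1}$ is only holomorphic, not a morphism. The fix is to realize a Runge compact neighbourhood $L_j$ of $\overline{R_j}$ as a Runge compact subset of some affine algebraic curve $C_j$ — this is possible because any compact bordered Riemann surface embeds as a Runge domain in an affine algebraic curve (indeed $R_j$ itself, being finitely connected after shrinking, is biholomorphic to a domain in a compact Riemann surface with finitely many points removed) — and then to feed $f_{j-1}|_{L_j}$, which is holomorphic near the Runge compact $L_j\subset C_j$, as the input "continuous map holomorphic near $K$" of Definition~\ref{def:aOka1}. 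One must check that the Runge property is preserved under this embedding and that the successive embeddings can be chosen compatibly enough (or independently, since each stage only sees finitely much data). A secondary subtlety is bookkeeping the jet conditions so that interpolation constraints imposed at stage $j$ are not destroyed at stage $j+1$: this is arranged by demanding that $f_{j+1}$ agree with $f_j$ to order $k(a)$ (not just approximate it) at every $a\in A\cap\overline{R_j}$, which is permitted since those finitely many points lie in the Runge compact used at stage $j+1$. With these points settled, steps (1), (3), and (4) are routine Mittag-Leffler-type convergence and elementary homotopy theory.
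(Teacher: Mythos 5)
Your proposal is correct and follows essentially the same route as the paper: both rest on Stout's theorem that a compact smoothly bounded domain in an open Riemann surface embeds (as a Runge domain) in an affine algebraic curve, combined with an inductive application of the aOka-1 condition along an exhaustion. The only difference is that the paper outsources the exhaustion, convergence, and homotopy bookkeeping to the equivalent local characterisation of Oka-1 in \cite[Proposition 2.7]{AlarconForstneric2023Oka1}, whereas you carry out that globalisation by hand.
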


Namely, by a result of Stout \cite[Theorem 8.1]{Stout1965TAMS},
every compact smoothly bounded domain in an open Riemann surface
is biholomorphic to a domain in an affine Riemann surface, so an inductive 
application of the aOka-1 condition yields the ostensibly weaker but equivalent formulation of the Oka-1 property in 
\cite[Proposition 2.7]{AlarconForstneric2023Oka1} 
(see the paragraph following Definition \ref{def:Oka1} above).

\begin{remark}
By \cite[Corollary 1.8]{ArzhantsevKalimanZaidenberg2024}, a compact algebraic manifold $X$ of dimension $n$ is unirational if and only if there is a surjective morphism $\C^n\to X$.  Then $X$ is densely dominable by $\C^n$, so $X$ is Oka-1 by \cite[Theorem 2.2]{AlarconForstneric2023Oka1}.
 (We thank an anonymous referee for pointing this out to us.)  Whether $X$ is in fact Oka or aOka-1 is unknown.
\end{remark}

Finally, in Section \ref{sec:LSAP} we continue the investigation of the 
local spray approximation property (LSAP) introduced in 
\cite[Definition 7.11]{AlarconForstneric2023Oka1}. 
The Oka property clearly implies LSAP, which in turn implies the
Oka-1 property by \cite[Proposition 7.13]{AlarconForstneric2023Oka1}.  
We prove new functorial properties of the class LSAP  
(see Proposition \ref{prop:LSAPfunct}) and deduce the following result.

%
%
\begin{theorem}\label{th:class}
Let $\mathscr C$ be the smallest class of complex manifolds that contains all Oka manifolds and is closed with respect to strong dominability. 
Then every manifold in $\mathscr C$ enjoys LSAP and hence is Oka-1.
\end{theorem}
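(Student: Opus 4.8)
The plan is to deduce Theorem~\ref{th:class} from the stability of LSAP under strong dominability, combined with the two implications already recorded above: every Oka manifold enjoys LSAP, and, by \cite[Proposition 7.13]{AlarconForstneric2023Oka1}, LSAP implies the Oka-1 property. It therefore suffices to prove that the class of complex manifolds enjoying LSAP is itself closed under strong dominability; this is the relevant part of Proposition~\ref{prop:LSAPfunct}. Granting that, the class of LSAP manifolds contains all Oka manifolds and is closed under strong dominability, so by the minimality in the definition of $\mathscr C$ we get $\mathscr C\subseteq\{\text{manifolds with LSAP}\}\subseteq\{\text{Oka-1 manifolds}\}$, which is the assertion of the theorem. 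The only real content is thus Proposition~\ref{prop:LSAPfunct}, whose proof I would organise as follows.

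Suppose $Y$ is strongly dominated by a manifold $X$ enjoying LSAP, and let $\pi\colon X\to Y$ be the witnessing holomorphic surjection --- a submersion carrying the lifting property built into strong domination, namely that holomorphic (families of) discs in $Y$, already holomorphic near a given closed parameter subset, admit compatible holomorphic lifts through $\pi$. To verify LSAP for $Y$, I would take the data appearing in \cite[Definition 7.11]{AlarconForstneric2023Oka1} --- a (spray of) disc(s) with values in $Y$ of the type figuring there, together with the relevant compact and parameter sets and $\epsilon>0$ --- lift it through $\pi$ to the corresponding data in $X$, apply LSAP in $X$ to obtain the approximating holomorphic spray $F$ there, and then push it forward by setting $G=\pi\circ F$. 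It then remains to check three things: $G$ is again an object of the required type, a holomorphic spray of discs in $Y$, which follows since a composition of holomorphic maps is holomorphic and, where domination is demanded, $d\pi$ is surjective, so the parameter derivative of $G$ along the centre is onto $TY$ whenever that of $F$ is onto $TX$; $G$ agrees with the original $Y$-data over the given closed parameter subset, because $F$ agrees with the lifted data there and $\pi$ composes through; and $G$ approximates the original $Y$-data on the relevant compacts, this being inherited from the approximation of the lifted data by $F$ via uniform continuity of $\pi$ on compacts. The parametric bookkeeping --- the distinction between closed discs and neighbourhoods of them, and the matching over the closed parameter subset --- is routine once the lift is in hand, using that $\pi$ is globally defined on $X$ and that dominating sprays may be restricted to smaller parameter balls without loss.

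The main obstacle is the lifting step: producing the holomorphic lift of the $Y$-data and, in the parametric version, lifting a whole holomorphic family of discs compatibly with a given lift over a closed parameter subset. This is precisely where the strength of strong domination over mere dominability is used. If strong domination is witnessed by a holomorphic section $s\colon Y\to X$ of $\pi$, one simply composes with $s$ and the parametric case is no harder; if instead it is a fibrewise property, for instance $\pi$ a holomorphic fibre bundle with Oka fibre, one obtains the lift over the contractible base of a disc from the Oka principle for sections of bundles with Oka fibre (there being no topological obstruction over a disc), and in the parametric case one extends the given lift over the closed parameter subset by the same principle with interpolation. A secondary, purely technical point is that the spray dimension and the radius of the parameter ball emerging from LSAP in $X$ are immaterial for $Y$: the defining condition of LSAP asks only for the existence of \emph{some} dominating spray with the stated approximation, and $G$ furnishes one.
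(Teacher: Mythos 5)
Your top-level reduction is exactly the paper's: the class of LSAP manifolds contains all Oka manifolds and (by Proposition~\ref{prop:LSAPfunct}(i)) is closed under strong dominability, so it contains $\mathscr C$ by minimality, and LSAP implies Oka-1 by Proposition~\ref{prop:LSAP}(a). The problem is in your proof of the closure step, where there is a genuine gap: you misconstrue what strong dominability provides. It does \emph{not} give a single surjective holomorphic submersion $\pi:X\to Y$, let alone one ``carrying a lifting property'', a holomorphic section, or the structure of a fibre bundle with Oka fibre. It gives, for each point $x$ of the manifold to be proved LSAP, \emph{some} LSAP manifold $Y_x$ (varying with $x$), a holomorphic map $h:Y_x\to X$, and a single point $y\in h^{-1}(x)$ at which $dh_y$ is surjective; $h$ need be neither surjective nor a submersion away from $y$. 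Both of the mechanisms you propose for the lifting step (composing with a section, or invoking the Oka principle for sections of a bundle with Oka fibre) therefore rest on hypotheses that are simply not available, and the ``main obstacle'' you identify is left unresolved.

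The missing idea is that LSAP is a \emph{pointwise local} condition, and this locality is what makes the lift trivial. To verify LSAP at $x$, you are free to choose the neighbourhood $V$ of $x$ in Definition~\ref{def:LSAP} as small as you like; only the input spray must take values in $V$, while the approximating spray on the larger disc may take values anywhere in the manifold. Since $h$ is submersive at $y$, the implicit function theorem gives a neighbourhood $U$ of $y$ on which $h$ is equivalent to a projection $U\cong V\times W\to V$ with $V=h(U)$. Taking this $V$ as the LSAP neighbourhood of $x$, any spray $F:D\times\B^N\to V$ lifts tautologically as $G=(F,z)$ with constant second component $z$; one then applies LSAP of $Y_x$ at $y$ (shrinking $U$ inside the LSAP neighbourhood of $y$ first) to approximate $G$ by a spray $\wt G$ on $D'$, and pushes forward by $h$. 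No section, no Oka principle, and no global structure on $h$ is needed. Without this observation your argument does not go through; with it, the rest of your bookkeeping (holomorphy and approximation are preserved under composition with $h$) is fine.
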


Saying that $\mathscr C$ is closed with respect to strong dominability means that if $X$ is a complex manifold such that for every point $x\in X$ there is a manifold $Y$ in $\mathscr C$ and a holomorphic map $f:Y\to X$ that is a submersion at some point in $f^{-1}(x)$, then $X$ is in $\mathscr C$.  It is an interesting open question whether the class of Oka manifolds is closed with respect to strong dominability (or merely Hausdorff-local).  The question is open for the class of Oka-1 manifolds as well.  The theorem shows that closing the class of Oka manifolds with respect to strong dominability does preserve some Oka-type properties: every manifold in the class thus obtained is Oka-1.

%
%
%
%
\section{Oka-1 domains in Stein manifolds with the density property}\label{sec:domains}

The main result of this section (see Theorem \ref{th:domains}) describes 
a large class of Oka-1 domains in Stein manifolds with the density property.
It can be seen as an analogue of Kusakabe's theorem 
\cite[Theorem 1.2]{Kusakabe2024AM} that the complement
of any compact holomorphically convex set in a Stein manifold with the 
density property is an Oka manifold. 

Recall that a holomorphic vector field on a complex manifold $X$ is called 
{\em complete} if its flow exists for all complex values of time, 
so it forms a complex
one-parameter group of holomorphic automorphisms of $X$.
The following notion was introduced by Varolin \cite{Varolin2001}.
(See also \cite[Definition 4.10.1]{Forstneric2017E}.) 

%
%
\begin{definition}
[Varolin \cite{Varolin2001}] 
\label{def:density}
A complex manifold $X$ has the {\em density property} if every holomorphic 
vector field on $X$ can be approximated uniformly on compacts by 
sums and commutators of complete holomorphic vector fields on $X$.
\end{definition}

This condition is most interesting and restrictive on Stein manifolds, 
since they admit many holomorphic vector fields.
The fact that the Euclidean spaces $\C^n$ for $n>1$ have the density property 
was discovered by Anders\'en and Lempert \cite{AndersenLempert1992}.  
An important feature of Stein manifolds with the density property
is that every isotopy of biholomorphic maps between pseudoconvex 
Runge domains in such a manifold 
can be approximated by an isotopy of holomorphic automorphisms 
(see Forstneri\v c and Rosay \cite{ForstnericRosay1993} and
\cite[Theorems 4.9.2 and 4.10.5]{Forstneric2017E}). 
It is known that most complex Lie groups and complex homogeneous 
manifolds of dimension greater than one have the density property, 
and there are numerous other classes of examples. 
Surveys of this subject can be found in \cite[Sect.\ 4.10]{Forstneric2017E}, 
\cite{ForstnericKutzschebauch2022}, and \cite{Kutzschebauch2020}. 

The following is the main result of this section.

%
%
\begin{theorem}\label{th:domains}
Let $X$ be a Stein manifold with the density property and  
$\rho:X\to \R_+=[0,\infty)$ be a $\Cscr^2$ Morse exhaustion function 
whose Levi form has at least two positive eigenvalues at every point.
Then, for every $c\in \R_+$, the domain 
\[ 
	X_c=\{x\in X:\rho(x)>c\}
\]
is an Oka-1 manifold. Furthermore, given a compact Runge set
$K$ in an open Riemann surface $R$, a finite set $A\subset K$, 
and a continuous map $f:R \to X$ which is holomorphic 
on an open neighbourhood $U\subset R$ of $K$ 
and satisfies $f(R\setminus \mathring K)\subset X_{c}$, 
there is a proper holomorphic map $F:R\to X$ 
which approximates $f$ as closely as desired uniformly on $K$,
agrees with $f$ to a given finite order in the points of $A$,
and satisfies $F(R\setminus \mathring K)\subset X_c$.
\end{theorem}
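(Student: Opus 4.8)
The plan is to deduce the first assertion from the second and to prove the second by induction over a Runge exhaustion of $R$. For the reduction, by \cite[Proposition 2.7]{AlarconForstneric2023Oka1} it suffices, in order that $X_c$ be Oka-1, to treat a special pair $K\subset K'=K\cup D$ in an open Riemann surface $R_0$, with $D$ a small disc attached to $K$ along a boundary arc, together with jet interpolation at finitely many points $A\subset K$; here a holomorphic map $\varphi$ into $X_c$ on a neighbourhood of $K$ is to be approximated on $K$, with prescribed jets on $A$, by a holomorphic map of a neighbourhood of $K'$ into $X_c$. I would extend $\varphi$ continuously over $D$ and over a thin collar of $K'$ to a map into $X_c$, take $R$ to be a Runge neighbourhood of $K'$ in $R_0$ in which $K$ remains Runge, and apply the second assertion to the extended map, which is holomorphic near $K$ and carries $R\setminus\mathring K$ into $X_c$. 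The resulting proper holomorphic map $F\colon R\to X$ satisfies $F(R\setminus\mathring K)\subset X_c$ and, being uniformly close on $K$ to $\varphi$, has $F(K)\subset X_c$; hence $F(K')\subset X_c$, and $F$ restricted to a neighbourhood of $K'$ is the desired approximation. So the content is the second assertion.

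For that, I would fix a Runge exhaustion $K\subset\Omega_1\Subset\Omega_2\Subset\cdots$ of $R$ by smoothly bounded compact domains in which $\Omega_1$ is a Runge neighbourhood of $K$ and, for $j\ge 2$, $\Omega_j$ is obtained from $\Omega_{j-1}$ by attaching along a boundary arc either a small holomorphically convex bump or a handle; writing $N_j=\overline{\Omega_j}\setminus\mathring\Omega_{j-1}$ (with $\Omega_0:=K$), each $(\overline{\Omega_{j-1}},N_j)$ for $j\ge 2$ is a Cartan pair of the standard special type, and the new pieces $N_j$ may be taken as small as we please. Choose $\epsilon_j>0$ with $\sum_j\epsilon_j<\epsilon$, small enough that an $\epsilon_j$-sized change of a map alters $\rho$ composed with it by less than $2^{-j}$ and keeps inside $X_c$ every point lying at distance $\ge 2^{-j}$ from $\partial X_c$. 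I would then construct holomorphic maps $f_j$ on neighbourhoods of $\overline{\Omega_j}$, with $f_0=f$ near $K$, maintaining for $j\ge 1$: (i) $f_j$ agrees with $f$ to order $k$ at each point of $A$; (ii) $\dist_X(f_j,f_{j-1})<\epsilon_j$ on $\overline{\Omega_{j-1}}$; (iii) $f_j(\overline{\Omega_j}\setminus\mathring K)\subset X_c$; and (iv) $\rho\circ f_j>j$ on $N_j$ away from the narrow band where $N_j$ meets $\overline{\Omega_{j-1}}$. The limit $F=\lim_j f_j$ is then holomorphic on $R$, satisfies (i), approximates $f$ within $\epsilon$ on $K$ by (ii), carries $R\setminus\mathring K$ into $X_c$ by (iii) and the choice of the $\epsilon_j$, is proper by (iv) (so $\rho\circ F\to\infty$), and is homotopic to $f$ because each passage from $f_{j-1}$ to $f_j$ is effected through a homotopy of maps $R\to X$ respecting (iii).

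The inductive step is where the hypotheses on $X$ and $\rho$ are used (the case $j=1$ being simpler, as the continuous map $f$ on $\overline{\Omega_1}$ only needs a Mergelyan approximation and a push-up, with no patching). Given $f_{j-1}$ with $j\ge 2$, I would first pick a continuous map $g_j$ on $\overline{\Omega_j}$ equal to $f_{j-1}$ on $\overline{\Omega_{j-1}}$ (hence holomorphic there) with $g_j(\overline{\Omega_j}\setminus\mathring K)\subset X_c$ --- possible by (iii) at stage $j-1$ and the smallness of $N_j$ --- and then, using that $X$ is Stein and $N_j$ thin, apply a Mergelyan approximation with jet interpolation at $A$ to obtain a holomorphic map $h_j$ near $\overline{\Omega_j}$ approximating $f_{j-1}$ on $\overline{\Omega_{j-1}}$ and $g_j$ on $N_j$, so that $h_j(N_j\setminus\mathring K)\subset X_c$. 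Two operations then complete the step. To patch $h_j$ with $f_{j-1}$ across the Cartan pair $(\overline{\Omega_{j-1}},N_j)$ I would use the density property: the transition cocycle on the overlap is close to the identity, so by Anders\'en--Lempert theory \cite[Theorems 4.9.2 and 4.10.5]{Forstneric2017E} it splits as a composition of holomorphic automorphisms of $X$ close to the identity and fixing the jets at the points $f_{j-1}(A)$; being $\Cscr^0$-small and carried out in the interior of $X_c$, this correction preserves (iii). To \emph{push up} --- to upgrade (iii) on $N_j$ to the inequality in (iv) --- I would use the Levi-form hypothesis: restricted to the complex tangent space of a regular level set $\{\rho=t\}$, which is a complex hyperplane, the Levi form of $\rho$ still has at least $2-1=1$ positive eigenvalue, so at every point of $\{\rho=t\}$ there is a complex-tangential direction $v$ with $\partial\rho(v)=0$ and $\mathscr{L}\rho(v,v)>0$. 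Composing a holomorphic disc that has an interior minimum of $\rho$ at such a point with the complex-time flow of a holomorphic vector field on $X$ --- supplied by the density property so as to be complete and with flow under global control --- whose value at that point is $v$, the time parameter being a small holomorphic function that is $\approx 1$ near the critical point and tiny elsewhere, raises $\rho$ along the disc near that point (to second order, by a positive multiple of a square) while moving the disc only slightly elsewhere; thus the minimum of $\rho$ along the disc increases by a definite amount, and one never leaves $X_c=\{\rho>c\}$. Iterating this over a sequence of regular values $c<t_1<t_2<\cdots$ drives $\rho$ above $j$ on $N_j$; combining the two operations into a single approximation step produces $f_j$.

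I expect the push-up step to be the main obstacle, and it is the only place using both the density property and the precise demand of \emph{at least two} positive eigenvalues of the Levi form: a single eigenvalue would be lost on passage to the complex tangent space of a level set, leaving no direction to manoeuvre in, while the one-dimensionality of the source is exactly what makes the codimension count ($1<2$) work so that the required perturbations of discs exist globally. This is the one-dimensional-source counterpart, in the presence of the density property, of the techniques for constructing proper holomorphic maps into $q$-convex domains and of the role of the density property in Kusakabe's theorem \cite[Theorem 1.2]{Kusakabe2024AM}; the delicate part is making the push-up cooperate with the Mergelyan extension and the Cartan patching while keeping all of (i)--(iv) at once. The remaining points are routine bookkeeping: the initial step $j=0\to 1$, where $f$ itself may have $\rho$ close to $c$ on the collar just outside $K$ and must be pushed up there; the handling of the narrow attachment bands in (iv) so that (iv) does yield properness; and the assembly of the global homotopy from the homotopies in the individual steps. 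The assumed $\Cscr^2$ regularity of $\rho$ is enough throughout --- in particular, its Morse critical points have index at most $2n-2$ with $n=\dim_{\C}X$, so the exhaustion of $X$ by sublevel sets of $\rho$ is topologically under control.
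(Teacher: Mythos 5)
Your overall architecture (reduce the first assertion to the second via \cite[Proposition 2.7]{AlarconForstneric2023Oka1}, then induct over a bump exhaustion of $R$) is reasonable, but the step you yourself flag as the main obstacle --- the push-up --- is a genuine gap, and it is exactly the point where the paper instead invokes a known theorem as a black box. Raising $\rho\circ F$ over the new bump while keeping the image in $X_c$ is the content of \cite[Theorem 1.1 and Lemma 6.3]{DrinovecForstneric2007DMJ}, a long and delicate construction. Your proposed mechanism --- flow along a complete vector field whose value at one boundary point is a complex-tangential direction $v$ with positive Levi curvature --- fails as stated: $v$ is tangential to the level set only at that single point, so at nearby points of the disc the field has a nonzero $\partial\rho$-component and the flow can decrease $\rho$ to first order, pushing the image out of $X_c$ before the second-order gain appears. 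Controlling this requires the adapted local sprays and peak functions of the cited work, a gluing step, and a separate argument for crossing the critical levels of $\rho$ (which you must do, since you want $\rho\circ f_j\to\infty$); none of these is supplied by your sketch. (A smaller but real slip: your ``Mergelyan approximation of $g_j$ on $N_j$'' is impossible when $g_j$ is merely continuous on the bump; one must instead choose the extension over $N_j$ holomorphic from the start and then glue.)

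Second, the density property is deployed in the wrong places. It is not needed for the Cartan-pair gluing (standard spray gluing with the implicit function theorem works for maps into any complex manifold, \cite[Sect.\ 5.9]{Forstneric2017E}), nor is it the engine of the push-up (the lifting lemma of \cite{DrinovecForstneric2007DMJ} holds in any manifold with a $2$-positive exhaustion). Where it is genuinely needed --- and where the paper uses it --- is to handle the noncompact remainder of $R$ in one stroke: by Kusakabe's theorem \cite[Theorem 1.2]{Kusakabe2024AM} the complement $O=X\setminus L$ of a compact $\Oscr(X)$-convex set squeezed between two superlevel sets is an Oka manifold, so the continuous extension of the partially holomorphic map to all of $R$ (which exists because the Morse index bound $2\dim_\C X-2$ gives $\pi_1(X_c,X_{c'})=0$ --- a point your scheme also needs but does not address when asserting the global homotopy) can be made holomorphic with image in $O$ by the Oka principle \cite[Theorem 1.3]{Forstneric2023Indag}; properness then comes from alternating the two theorems over exhaustions of $R$ and of $X$, or from \cite{AndristWold2014}. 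As written, your argument makes no essential use of the density property, which would yield a stronger theorem than the one stated and should be treated as a warning sign rather than a bonus.
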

 
\begin{proof}
The first claim in the theorem, that $X_c$ is an Oka-1 domain,
follows from the second part of the theorem and  
\cite[Proposition 2.7]{AlarconForstneric2023Oka1},
which gives a criterion for a complex manifold to be Oka-1
by a simpler approximation condition and
with interpolation in finitely many points.

We now prove the second part of the theorem. 
We may assume that $R$ and $X$ are connected. 
It clearly suffices to prove the result for a domain $X_{c_0}=\{\rho>c_0\}$
where $c_0\in \R_+$ is a regular value of $\rho$.
Let $A\subset K\subset U\subset R$ and $f:R \to X$ be as in the 
theorem with $c=c_0$. 
Pick a smoothly bounded Runge domain $D$ in $R$ with 
$K\subset D\subset \bar D\subset U$ such that 
\begin{equation}\label{eq:f-inclusion}
	f(\overline{D\setminus K})\subset X_{c_0}.
\end{equation}
Since $\rho:X\to\R_+$ is an exhaustion function and the manifold $X$ is Stein, 
there exist a number $c_1>c_0$ and a compact $\Oscr(X)$-convex subset 
$L\subset X$ whose complement $O=X\setminus L$ satisfies 
\begin{equation}\label{eq:inclusion}
	X_{c_1} \subset O \subset X_{c_0}. 
\end{equation}	
Since $\rho:X\to\R_+$ is a Morse exhaustion function 
whose Levi form has at least two positive eigenvalues at 
every point, \cite[Theorem 1.1]{DrinovecForstneric2007DMJ} and its proof
(see in particular \cite[Lemma 6.3]{DrinovecForstneric2007DMJ}) 
furnishes a holomorphic map $\tilde f:\bar D\to X$ satisfying 
the following conditions for any given pair of numbers $\epsilon>0$
and $k=1,2,\ldots$.
\begin{enumerate}[\rm (a)]
\item $\tilde f(bD)\subset X_{c_1}$. 
\item $\tilde f(\overline{D\setminus K})\subset X_{c_0}$.
\item $\sup_{p\in K} \dist_X(\tilde f(p),f(p)) < \epsilon$.
\item $\tilde f$ agrees with $f$ to order $k$ in every point of $A$. 
\item $\tilde f$ is homotopic to $f|_{\bar D}$ by a homotopy of maps
$f_t:\bar D \to X$ with $f_0=f|_{\bar D}$, $f_1=\tilde f$ 
satisfying $f_t(\overline{D\setminus K})\subset X_{c_0}$ 
for every $t\in [0,1]$.
\end{enumerate}
The assumption on the Levi form of $\rho$ implies that it has Morse index 
at most $2\dim_{\C} X-2$ at every critical point. 
Hence, for any pair of regular values $c<c'$ of $\rho$ 
the set $X_c=\{\rho>c\}$ is topologically obtained from $X_{c'}$ by attaching 
handles of dimension at least $2$. It follows that the inclusion
$X_{c'}\hra X_c$ induces a bijection of the path components
and the relative fundamental group $\pi_1(X_c,X_{c'})$ vanishes.
Since the pair $(R,D)$ is homotopy equivalent to a relative 
$1$-dimensional CW complex, we infer that $\tilde f$ extends from 
$\bar D$ to a continuous map $\tilde f:R\to X$ 
satisfying the following two additional conditions:
\begin{enumerate}[\rm (f)]
\item $\tilde f(R\setminus D)\subset X_{c_1}$.
\end{enumerate}
\begin{enumerate}[\rm (g)]
\item The homotopy $f_t:\bar D \to X$ in condition (e)
extends to a homotopy $f_t:R\to X$ $(t\in [0,1])$ 
satisfying $f_0=f$, $f_1=\tilde f$, and 
$f_t(R\setminus D)\subset X_{c_0}$ for every $t\in [0,1]$.
\end{enumerate}

Since the Stein manifold $X$ is assumed to have the density property, 
the domain $O=X\setminus L$ in \eqref{eq:inclusion} is Oka by Kusakabe 
\cite[Theorem 1.2]{Kusakabe2024AM}. 
From the condition $\tilde f(R\setminus D)\subset X_{c_1}\subset O$ 
(see \eqref{eq:inclusion} and (f))  
and \cite[Theorem 1.3]{Forstneric2023Indag} it follows that 
there is a holomorphic map $F:R\to X$ which approximates $\tilde f$ 
as closely as desired on $\bar D$, it agrees with $\tilde f$ 
(and hence with $f$) to order $k$ in the points of $A$,
it satisfies $F(R\setminus D)\subset O$, 
and $F$ is homotopic to $f$ through maps $R\to X$ sending 
$R\setminus \mathring K$ to $X_0$. Thus, $F$ satisfies  
conditions (1) and (2) in Definition \ref{def:Oka1}.
In particular, if $f(K)\subset X_{c_0}$ then the map $F$, and the homotopy
from $f$ to $F$, can be chosen to have range in $X_{c_0}$.
By \cite[Proposition 2.7]{AlarconForstneric2023Oka1} 
this shows that $X_{c_0}$ is an Oka-1 manifold.

Furthermore, we can choose the map $F:R\to X$ as above to be a
proper holomorphic immersion (embedding if $\dim X\ge 3$)
provided the jet interpolation conditions allow it. 
This follows from the main result of Andrist and Wold \cite{AndristWold2014}.
(See also \cite{AndristForstnericRitterWold2016,Forstneric2019JAM} for an 
extension of their result to maps from any Stein manifold $R$ satisfying
$2\dim R\le \dim X$.) By using the recently established 
fact that $X\setminus L$ is an Oka domain when $X$ is a Stein manifold
with the density property and $L$ is a compact 
$\Oscr(X)$-convex subset of $X$
(see Kusakabe \cite[Theorem 1.2]{Kusakabe2024AM}), 
a simpler proof is possible; cf.\ \cite[Theorem 5.1]{ForstnericKusakabe2023X}.
The main idea is the following. We exhaust $R$ by an increasing sequence 
of compact $\Oscr(R)$-convex subsets  
$K=K_0\subset K_1\subset\cdots\subset \bigcup_{i=0}^\infty K_i=R$.
Likewise, we exhaust $X$ by compact $\Oscr(X)$-convex subsets 
$L=L_1\subset L_2\subset\cdots\subset \bigcup_{i=1}^\infty L_i=X$,
where $L$ is as above. (The sets $L_i$ are chosen as sublevel sets of a 
strongly plurisubharmonic Morse exhaustion function $\tau:X\to\R_+$.) 
We then inductively construct a sequence of continuous maps 
$f_i:R\to X$ $(i=0,1,\ldots)$ such that $f_0$ is the initial map, 
and for every $i=1,2,\ldots$
the map $f_i$ is holomorphic on $K_i$, it approximates $f_{i-1}$ as closely
as desired on $K_{i-1}$ and agrees with it to order $k$ in the points of 
$A\subset K$, and it satisfies 
$f_i(R\setminus \mathring K_i) \subset O_i=X\setminus L_i$.
To obtain such a sequence, we alternately use 
\cite[Theorem 1.1]{DrinovecForstneric2007DMJ} and 
\cite[Theorem 1.3]{Forstneric2023Indag}, just like in the above construction 
of the map $\tilde f=f_1$ from the given initial map $f=f_0$.
If the approximations are close enough then there is a limit holomorphic map
$F=\lim_{i\to \infty} f_i:R\to X$, which is proper as a map to $X$
and satisfies the conditions in the theorem.
\end{proof}

A characteristic feature of domains $X_c$ in Theorem \ref{th:domains}
is that, taking $c\in \R_+$ to be a regular value of $\rho$, the compact set 
\[
	L_c=\{x\in X:\rho(x)\le c\} = X\setminus X_c
\] 
is such that the Levi form of its boundary $bL_c=\{\rho=c\}$ has at least one 
positive eigenvalue at every point (since the Levi form of $\rho$ has at least two 
positive eigenvalues at every point). Let us call a compact set $L\subset X$ 
with $\Cscr^2$ boundary satisfying this property {\em tangentially 1-convex}. 
%
%
Note that every such set $L$ is strongly pseudoconvex 
if $\dim X=2$, but this fails if $\dim X\ge 3$.
A specific example is $L=\{\rho\le c\} \subset \C^3$ where 
$\rho(z_1,z_2,z_3)=|z_1|^2 + |z_2|^2 - |z_3|^2 + |z_3|^4$ 
and $c>0$ is a regular value of $\rho$. Indeed, $\rho$ is an
exhaustion function on $\C^3$ whose Levi form has eigenvalues
$1,1,-1+4|z_3|^2$ at each point, so $L$ is tangentially 1-convex.
However, the signature of the Levi form of $bL$
along the circle $\{|z_1|^2=c,\ z_2=z_3=0\}\subset bL$ is $(1,-1)$,
so $L$ is not strongly pseudoconvex.

\begin{problem}\label{prob:tangencially1convex}
Assume that $X$ is a Stein manifold with the density property and
$L\subset X$ is a compact tangentially 1-convex set. Is $O=X\setminus L$
and Oka-1 manifold? 
\end{problem} 

Another reason for considering this question is the following.
Every connected Oka-1 manifold
is $\C$-connected and hence Liouville, that is, it does not admit any 
nonconstant bounded plurisubharmonic functions. 
If $L\subset X$ is a compact set whose $\Cscr^2$ boundary contains
a strongly pseudoconcave point $p\in bL$, then its complement $O=X\setminus L$ 
admits a local negative strongly plurisubharmonic peak function 
vanishing at $p$. The maximum of this function and a suitably chosen negative
constant is a nonconstant negative plurisubharmonic function on $O$, so
$O$ is not Oka-1. On the other hand, pseudoconcave boundary 
points of $L$ are the only obstruction to $O=X\setminus L$ being Liouville.
For simplicity, we consider the case $X=\C^n$, 
noting that the same argument applies 
in general. Given $\epsilon>0$, we set
\[
	L_\epsilon = \bigl\{z\in\C^n: 
	\dist(z,L)= \min_{w\in L}|z-w| \le \epsilon\bigr\}. 
\]

\begin{proposition}\label{prop:L}
Let $L$ be a compact subset of $\C^n$, $n>1$.
Assume that there are numbers $0<\delta<\epsilon$ 
such that for every point $p\in L_\delta\setminus L$ there is a compact complex curve  
$A$ in $\C^n\setminus L$ with $p\in A$ and $bA \subset \C^n\setminus L_\epsilon$.
Then $\C^n\setminus L$ is Liouville. 
\end{proposition}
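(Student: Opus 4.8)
The plan is to show that any bounded plurisubharmonic function $u$ on $\C^n\setminus L$ is constant by proving it extends to a bounded plurisubharmonic function on all of $\C^n$, which must then be constant by Liouville's theorem for $\C^n$, and hence $u$ itself is constant.

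First I would argue that $u$ extends across $L$ as a bounded plurisubharmonic function. The key is the hypothesis: for every point $p\in L_\delta\setminus L$ there is a compact complex curve $A\subset\C^n\setminus L$ with $p\in A$ and $bA\subset\C^n\setminus L_\epsilon$. By the maximum principle for plurisubharmonic functions restricted to $A$ (which are subharmonic on the normalization of $A$), we get $u(p)\le \max_{bA} u \le M$, where $M=\sup_{z\in\C^n\setminus L_\epsilon} u$. Letting $M^* = \limsup$ of $u$ at points approaching $L$ from outside, this gives $M^* \le M$, i.e.\ $u$ does not blow up near $L$ and in fact its boundary values from outside are controlled by its values on $\C^n\setminus L_\epsilon$. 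Since $L$ has (Hausdorff $2n$-)measure considerations aside, one defines $\tilde u = u$ on $\C^n\setminus L$ and $\tilde u = M^*$ (or the appropriate upper regularization) on $L$; the standard removable singularity theorem for plurisubharmonic functions across compact sets — using that $\tilde u$ is locally bounded above near $L$ and plurisubharmonic off $L$ — shows the upper semicontinuous regularization $\tilde u^*$ is plurisubharmonic on all of $\C^n$ and still bounded.

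Then $\tilde u^*$ is a bounded plurisubharmonic function on $\C^n$, hence constant, say equal to $c$. Therefore $u\le c$ everywhere on $\C^n\setminus L$, and since $\tilde u^* = u$ on $\C^n\setminus L$ we get $u\equiv c$ there as well (the regularization changes nothing on the open set where $u$ is already plurisubharmonic). This shows $\C^n\setminus L$ admits no nonconstant bounded plurisubharmonic function, i.e.\ it is Liouville.

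The main obstacle, and the step requiring the most care, is the removable singularity argument: one must ensure that the curves $A$ furnished by the hypothesis genuinely control the boundary behaviour of $u$ on \emph{all} of $\partial L$ (via the collar $L_\delta\setminus L$) and that the resulting extension is honestly plurisubharmonic across $L$ rather than merely bounded. The cleanest route is probably to first establish, using the family of curves, that $u$ extends continuously (or at least that $\limsup_{z\to p} u(z)$ is bounded uniformly) to a neighbourhood of $L$, and then invoke a standard extension theorem for plurisubharmonic functions across compact sets of the form "$L$ is the complement of a pseudoconcave-free domain" — or more directly, observe that if $v$ is plurisubharmonic and bounded above on $W\setminus L$ for an open $W\supset L$, its trivial extension's upper regularization is plurisubharmonic on $W$ provided $L$ is \emph{pluripolar-negligible in the relevant sense}, which here is guaranteed because the curves through points of $L_\delta\setminus L$ prevent $u$ from having a genuine singularity. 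One should also remark (as the paper does just before the proposition) that this is essentially the statement that the absence of strongly pseudoconcave boundary points removes the only obstruction to the Liouville property, so the geometric content is exactly the curve hypothesis.
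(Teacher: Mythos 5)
Your opening step---applying the maximum principle to $u|_A$ on the curves furnished by the hypothesis to get $u(p)\le\sup_{\C^n\setminus L_\epsilon}u$ for all $p\in L_\delta\setminus L$---is correct, and it is also the first ingredient of the paper's argument. The gap is in the extension step. There is no ``standard removable singularity theorem'' for plurisubharmonic functions across an arbitrary compact set: the classical extension results (Grauert--Remmert, Lelong, etc.) require the exceptional set to be closed and pluripolar (or an analytic subvariety, or otherwise negligible for plurisubharmonic functions), and nothing in the hypotheses forces $L$ to be pluripolar. For example, $L=\overline{\B^n}$ satisfies the hypotheses of the proposition (every exterior point lies on an affine complex line missing the closed ball, so suitable discs $A$ exist for any $0<\delta<\epsilon$), yet a closed ball is as far from pluripolar as a compact set can be, so the theorem you invoke does not apply to it. Your attempt to justify negligibility of $L$ ``because the curves prevent $u$ from having a genuine singularity'' is circular: the curves lie entirely in $\C^n\setminus L$ and carry no information whatsoever about the pluripotential-theoretic size of $L$. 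Note also that local boundedness of $u$ near $L$ is automatic, since $u$ is assumed bounded; that is not where the difficulty lies. The difficulty is that the upper semicontinuous regularization of the trivial extension need not satisfy the sub-mean-value inequality at points of $L$, and your argument offers nothing to establish this.

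The paper's proof never crosses $L$. It first observes that $O=\C^n\setminus L$ is connected and that a bounded plurisubharmonic $\phi$ on $O$ is constant, say equal to $c$, on $\C^n\setminus\widehat L$ (the complement of the polynomial hull), because that domain is Oka by Kusakabe's theorem and hence Liouville. Combining this with your curve estimate, $\sup_O\phi$ is either equal to $c$ or is attained on the compact set $\widehat L\setminus\mathring L_\epsilon$, which is contained in the open set $O$; in either case the supremum is attained at an interior point of the connected open set $O$, and the interior maximum principle for plurisubharmonic functions forces $\phi$ to be constant. If you wish to salvage your outline, replace ``extend $u$ across $L$ and apply Liouville on $\C^n$'' by ``show that $\sup_O u$ is attained at a point of $O$ and apply the maximum principle on $O$''; that is precisely what the curve hypothesis, together with the Liouville property of $\C^n\setminus\widehat L$, delivers.
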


\begin{proof}
By the maximum principle, the condition in the proposition fails on any bounded
connected component of $O=\C^n\setminus L$, and hence $O$ is connected.  
Assume that $\phi$ is a bounded plurisubharmonic function on $O$.
Then $\phi$ is constant on the Oka domain 
$\C^n\setminus \wh L \subset O$, where $\wh L$ denotes the polynomial
hull of $L$. Let $p\in A\subset O$ be a complex curve as in the proposition.  
By the maximum principle for the subharmonic function $\phi|_A$ 
we have that $\phi(p)\le \max_{bA} \phi$. 
Since $bA\subset \C^n\setminus L_\epsilon$ and 
this holds for every $p\in L_\delta\setminus L$, and since $\phi$ is constant 
on $\C^n\setminus \wh L$, $\phi$ assumes its maximum value on $O$ 
on the compact set $\wh L \setminus \mathring L_\epsilon$.
By the maximum principle for plurisubharmonic functions it follows that 
$\phi$ is constant on $O$.
\end{proof}

\begin{corollary}\label{cor:L}
If $L$ is a compact tangentially 1-convex set in $\C^n$, 
then $\C^n\setminus L$ is Liouville.
\end{corollary}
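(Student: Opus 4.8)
The plan is to deduce this from Proposition \ref{prop:L}: I will produce, for the given set $L$, a pair of radii $0<\delta<\epsilon$ such that every point $p\in\C^n\setminus L$ with $\dist(p,L)\le\delta$ lies on a small attached analytic disc that stays in $\C^n\setminus L$ and whose boundary circle is at distance more than $\epsilon$ from $L$. Once this is established, Proposition \ref{prop:L} gives immediately that $\C^n\setminus L$ is Liouville. (In Proposition \ref{prop:L} the ``compact complex curve'' may be taken to be the image of such a disc: the proof there only invokes the maximum principle for the subharmonic restriction $\phi|_A$, which applies to the image of any holomorphic map from the closed disc; alternatively a generic small perturbation makes the disc an embedded curve, which is not needed here.)

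First I would fix a $\Cscr^2$ defining function $\rho$ for $L$ on a bounded neighbourhood $W$ of $bL$, so that $L\cap W=\{\rho\le 0\}$ and $0<m\le\|\nabla\rho\|\le M$ on $\overline W$. Tangential $1$-convexity says that at every $q\in bL$ the Levi form of $\rho$ restricted to the complex tangent space $T^\C_q bL=\ker\partial\rho(q)$ has a positive eigenvalue; by compactness of $bL$ there are a constant $\lambda_0>0$ and, for each $q$, a unit vector $v(q)\in T^\C_q bL$ with $\Lcal_\rho(q)(v(q),v(q))\ge\lambda_0$. The core of the proof is the classical quadratic-disc construction attached at boundary points: for $q\in bL$ put $\gamma_q(\zeta)=q+\zeta v(q)+\zeta^2 w(q)$, where $w(q)$ is chosen uniformly bounded (using $\partial\rho(q)\ne 0$) so that the holomorphic part of the real Hessian of $\rho$ is cancelled along the disc; a Taylor expansion at $q$ then yields $\rho(\gamma_q(\zeta))=\Lcal_\rho(q)(v(q),v(q))\,|\zeta|^2+o(|\zeta|^2)$, with the remainder uniform in $q$ since $\rho\in\Cscr^2(\overline W)$. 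Hence there is $r_0>0$, independent of $q$, with $\rho(\gamma_q(\zeta))\ge\tfrac12\lambda_0|\zeta|^2$ for $|\zeta|\le r_0$.

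Next, given $p\in\C^n\setminus L$ close to $bL$, I would let $q\in bL$ be a nearest point, set $t=\dist(p,L)=|p-q|$ and let $\nu=\nabla\rho(q)/\|\nabla\rho(q)\|$ be the outer normal, so $p=q+t\nu$; then the translated disc $\tilde\gamma_p(\zeta)=\gamma_q(\zeta)+t\nu$ passes through $p$ at $\zeta=0$. Taylor-expanding $\rho$ in the normal direction and using $\partial_\nu\rho\ge m/2$ near $q$ gives $\rho(\tilde\gamma_p(\zeta))\ge\tfrac12\lambda_0|\zeta|^2+\tfrac14 m t>0$ for all $|\zeta|\le r_0$, provided $t$ is below a uniform threshold $\delta_1$. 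Thus $\tilde\gamma_p$ maps $\{|\zeta|\le r_0\}$ into $\{\rho>0\}\subset\C^n\setminus L$, and on the circle $|\zeta|=r_0$ we have $\rho\ge\tfrac12\lambda_0 r_0^2$, so (using $\rho(z)\le M\dist(z,L)$ for exterior points of $W$) that circle lies at distance $\ge\lambda_0 r_0^2/(2M)$ from $L$. Choosing $\epsilon$ with $0<\epsilon<\lambda_0 r_0^2/(2M)$ and $\delta=\min(\delta_1,\epsilon)$, the pair $(\delta,\epsilon)$ satisfies the hypothesis of Proposition \ref{prop:L}, and the corollary follows.

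The main obstacle is purely one of bookkeeping: arranging that all the constants — the uniform lower Levi eigenvalue $\lambda_0$, the disc radius $r_0$, the normal threshold $\delta_1$, and hence $\epsilon$ — can be chosen uniformly over $bL$. This is exactly where compactness of $L$ and $\Cscr^2$-regularity of $bL$ (uniform continuity of the first and second derivatives of $\rho$ on $\overline W$) are used. A minor additional point is that for exterior points sufficiently near $L$ the nearest-point projection onto $bL$ is well defined and $\dist(\cdot,L)=\dist(\cdot,bL)$ there; this too is standard for $\Cscr^2$ boundaries. Everything else is a routine computation.
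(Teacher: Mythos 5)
Your proof is correct and follows essentially the same route as the paper: the positive Levi eigenvalue at each boundary point yields an attached quadratic holomorphic disc meeting $L$ only at that point, compactness of $bL$ makes the radius and the separation of the boundary circles from $L$ uniform, and translating the discs outward produces the family required by Proposition \ref{prop:L}. You have merely written out the standard estimates that the paper's two-sentence proof leaves implicit.
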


\begin{proof}
The existence of a positive Levi eigenvalue at every point
$p\in bL$ gives an embedded holomorphic disc $\Delta_p\subset \C^n$
such that $\Delta_p\cap L=\{p\}$ and $\Delta_p$ is tangent to $bL$ precisely 
to the second order. By compactness of $bL$ we can find a family of such discs
with boundaries contained in $\C^n\setminus L_\epsilon$ for some 
$\epsilon>0$. Translating these discs aways from $L$ for a small
amount gives a family of holomorphic discs satisfying the conditions
of Proposition \ref{prop:L}, so the conclusion follows.
\end{proof}

%
%
%
%
\section{Oka-1 is a bimeromorphic invariant}\label{sec:bimeromorphic}

It was a long-standing open problem whether the Oka property is a bimeromorphic 
invariant, until Kusakabe proved that there are non-tame discrete subsets $A$ of $\C^n$, 
$n\geq 2$, for which $\C^n$ blown up at each point of $A$ is Brody volume-hyperbolic 
and hence not Oka \cite[Example A.3]{Kusakabe2021IUMJ}.  Previously, it was shown 
that if $A$ is tame, then the blow-up is weakly subelliptic and hence Oka 
\cite[Proposition 6.4.12]{Forstneric2017E}.  It remains an open question whether 
an Oka manifold blown up at a single point is Oka. 

On the algebraic side more is known.  Kusakabe proved, by a reduction 
to \cite[Theorem 1]{LarussonTruong2017}, that if an algebraic manifold $X$ 
satisfies the algebraic convex approximation property, aCAP, then so 
does the blow-up $B$ of $X$ along any algebraic submanifold 
(not necessarily connected);
see \cite[Corollary 4.3]{Kusakabe2020IJM}. 
It follows that if $X$ is algebraically elliptic
(see Section \ref{sec:AE} for the definition and more information on this class),
then $B$ is Oka. The optimal known geometric sufficient condition for a compact 
algebraic manifold to be algebraically elliptic is uniform rationality 
(see Arzhantsev et al.~\cite[Theorem 1.3]{ArzhantsevKalimanZaidenberg2024}). 
A compact algebraic manifold $X$ is called uniformly rational if every point
of $X$ admits a Zariski open neighbourhood isomorphic to a Zariski
open set in $\C^n$ with $n=\dim X$.
If $X$ is uniformly rational, so is the blow-up $B$ of $X$ 
along any algebraic submanifold (see Gromov \cite[3.5E]{Gromov1989}). 
Hence, if $X$ is compact and uniformly rational, then $B$ is algebraically elliptic.  
If $X$ is of class $\mathcal A$ (see \cite[Definition 6.4.4]{Forstneric2017E}), 
not necessarily compact, then $B$ is algebraically elliptic 
\cite[Corollary 2]{LarussonTruong2017}. 
Further, algebraic ellipticity implies strong dominability by affine spaces
\cite[Proposition 6]{LarussonTruong2017}, and if $X$ is 
dominable or strongly dominable, then so is $B$ \cite[Theorem 9]{LarussonTruong2017}.  In view of all these results, it is reasonable to expect that an algebraically elliptic manifold blown up along an algebraic submanifold is algebraically elliptic, but this is yet to be proved.

For 1-dimensional sources, the picture is much clearer.

\begin{theorem}\label{th:blow-up-and-down}
Let $B$ be the blow-up of a complex manifold $X$ along a submanifold, not necessarily connected.  Then $B$ is Oka-1 if and only if $X$ is Oka-1.
\end{theorem}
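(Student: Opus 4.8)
The key geometric fact is that the blow-down map $\pi:B\to X$ is a proper holomorphic map that restricts to a biholomorphism $B\setminus E \xrightarrow{\sim} X\setminus C$, where $C$ is the centre of the blow-up and $E=\pi^{-1}(C)$ is the exceptional divisor, a projective bundle $\P(N_{C/X})\to C$. The strategy is to prove the two implications separately and, in each direction, to reduce a map from an open Riemann surface $R$ to one whose image avoids the ``bad'' set ($C$ on the side of $X$, or $E$ on the side of $B$), since away from those sets $\pi$ is an isomorphism and the Oka-1 problem on one manifold transcribes verbatim to the other. Since $E$ (and $C$, being the image of $E$) has real codimension at least $2$ in $B$ (resp.\ in $X$), a generic perturbation of a map from a $1$-dimensional source misses it; this is the recurring device.

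\textbf{Direction ``$X$ Oka-1 $\Rightarrow$ $B$ Oka-1''.} Given the Oka-1 data on $B$: an open Riemann surface $R$, Runge compact $K$, discrete $A\subset R$, a continuous $f:R\to B$ holomorphic near $K\cup A$, $\epsilon>0$, and orders $k:A\to\N$. First I would homotope $f$, rel a neighbourhood of $K\cup A$ and keeping it holomorphic there, so that the image of the resulting map meets $E$ only in a discrete set of points and, after one further small perturbation transverse to $E$, only at points where one already had interpolation constraints — more carefully, since $E$ has real codimension $\ge 2$, a small perturbation of $f$ supported away from $K\cup A$ makes $f(R)\cap E$ empty (a generic smooth map from a real surface to $B$ avoids a set of real codimension $\ge 2$; one must check this can be done while staying holomorphic near $K$ and in homotopy class). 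Now $\pi\circ f:R\to X$ is a continuous map, holomorphic near $K\cup A$, landing in $X$; apply the Oka-1 property of $X$ to get a holomorphic $G:R\to X$ homotopic to $\pi\circ f$, $\epsilon'$-close on $K$, agreeing to order $k(a)$ at each $a\in A$. If the approximation on $K$ and the interpolation are tight enough, $G(K)$ stays in $X\setminus C$ and $G$ near $A$ stays where $f$ near $A$ sits (also off $C$, as arranged); and since $C$ has real codimension $\ge 2$ in $X$, we may further perturb $G$ off $K\cup A$ so that $G(R)\subset X\setminus C$ entirely. Then $F:=\pi^{-1}\circ G:R\to B$ is holomorphic, lands in $B\setminus E$, is $\epsilon$-close to $f$ on $K$ (by continuity of $\pi^{-1}$ on the relevant compact subset of $X\setminus C$), agrees with $f$ to the required orders at $A$, and is homotopic to $f$ (compose the homotopy $G\simeq\pi\circ f$ with $\pi^{-1}$ over $X\setminus C$, then concatenate with the earlier homotopies of $f$). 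This gives the Oka-1 property of $B$.

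\textbf{Direction ``$B$ Oka-1 $\Rightarrow$ $X$ Oka-1''.} Given Oka-1 data on $X$: $R,K,A,f:R\to X,\epsilon,k$. The map $f$ is holomorphic near $K\cup A$; perturb it (rel a neighbourhood of $K\cup A$, in its homotopy class, staying holomorphic there) so that $f(R)\cap C$ is empty, again possible because $C$ has real codimension $\ge 2$ in $X$. Then $\tilde f:=\pi^{-1}\circ f:R\to B$ is continuous, holomorphic near $K\cup A$, with image in $B\setminus E$. Apply the Oka-1 property of $B$ to obtain holomorphic $\tilde F:R\to B$, homotopic to $\tilde f$, $\epsilon''$-close to $\tilde f$ on $K$, agreeing to order $k(a)$ at $a\in A$. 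Set $F:=\pi\circ\tilde F:R\to X$; it is holomorphic, homotopic to $f$ (push the homotopy $\tilde F\simeq\tilde f$ down by $\pi$, then concatenate with the perturbation homotopy), agrees with $f$ to order $k(a)$ at $A$ (since $\pi$ is a biholomorphism near those points, jets transport), and $\dist_X(F(p),f(p))=\dist_X(\pi\tilde F(p),\pi\tilde f(p))<\epsilon$ on $K$ if $\epsilon''$ was chosen small relative to the modulus of continuity of $\pi$ on a compact neighbourhood of $\tilde f(K)$ in $B$. Hence $X$ is Oka-1.

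\textbf{Main obstacle.} The routine parts are the compositions, the jet transport, and the homotopy bookkeeping. The one point that genuinely requires care is the \emph{generic perturbation off the centre / exceptional divisor while preserving holomorphy near $K\cup A$, the homotopy class, and (on the $B$-side) membership of the correct branch of $\pi^{-1}$}. One wants: (i) a transversality-type statement that a continuous map from an open Riemann surface to a complex manifold, holomorphic on a neighbourhood of a compact set, can be deformed rel that neighbourhood so as to avoid a closed complex submanifold of complex codimension $\ge 1$ (so real codimension $\ge 2$) — true by an elementary Sard/transversality argument since we are not asking the perturbation to be holomorphic, only continuous, outside where we already have control; and (ii) that after applying the Oka-1 property one more continuous perturbation off $K\cup A$ makes the holomorphic map avoid $C$ (resp.\ $E$) globally — again a codimension count, using that the Oka-1 conclusion only fixes the map near $K\cup A$ and in homotopy class, both of which survive such a perturbation. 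Making (ii) compatible with keeping the map \emph{holomorphic} on all of $R$ is the subtle point: one resolves it by noting that we do not need global holomorphy of the perturbed map before the final application of Oka-1; rather, we arrange the \emph{input} to the Oka-1 machine (which is only required to be continuous, and holomorphic near $K\cup A$) to avoid the bad set near $K\cup A$, and then observe that the holomorphic \emph{output} automatically avoids the bad set on $K$ by approximation, near $A$ by interpolation, and can be taken to avoid it elsewhere — but in fact we do not even need the output to avoid it elsewhere, because we only ever compose with $\pi$ (always defined) and never with $\pi^{-1}$ on the output. Re-examining: in the ``$X\Rightarrow B$'' direction we do compose the Oka-1 output $G$ with $\pi^{-1}$, so there we genuinely need $G(R)\subset X\setminus C$ globally; this forces the extra argument that the Oka-1 property of $X$ can be upgraded to produce a map into the Oka-1 submanifold $X\setminus C$ — which follows because $X\setminus C$ is itself Oka-1 (it is biholomorphic to $B\setminus E$, and the Oka-1 property restricts to such Zariski-type open subsets? — no, Oka-1 is not obviously inherited by open subsets). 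The clean fix, which I would adopt, is: in the ``$X\Rightarrow B$'' direction, instead of inverting $\pi$ on a global map, work with the approximating/interpolating holomorphic map only where $\pi^{-1}$ is needed, namely on a large compact exhausting piece, and build $F:R\to B$ by the same alternating/inductive scheme used in the proof of Theorem~\ref{th:domains}: exhaust $R$ by Runge compacts $K=K_0\subset K_1\subset\cdots$, and at each stage push forward/pull back through $\pi$ on the region where the current map avoids $C$, using at each step the Oka-1 property of $X$ to approximate on $K_i$ and the codimension-$\ge 2$ transversality to keep the image off $C$ on $K_i$, so that $\pi^{-1}$ applies there; the limit is the desired $F:R\to B$. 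This inductive device, rather than any single clever perturbation, is where the real work lies.
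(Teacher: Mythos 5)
There is a genuine gap, and it sits exactly where you sensed trouble. In the direction ``$X$ Oka-1 $\Rightarrow B$ Oka-1'', the map $G:R\to X$ produced by the Oka-1 property of $X$ is \emph{globally holomorphic}, and a nonconstant holomorphic map from a Riemann surface will in general meet the centre $C$ in a nonempty discrete set; you cannot perturb $G$ off $C$ while keeping it holomorphic, and the Oka-1 definition offers no genericity or parametric family to which a transversality theorem could be applied. Your proposed inductive exhaustion does not repair this: at each stage the map holomorphic near $K_i$ is still a single holomorphic map that may hit $C$ inside $K_i$, and ``codimension-$\ge 2$ transversality'' is simply not available for it. The idea you are missing is the universal property of the blow-up for maps from smooth curves: if $G:R\to X$ is holomorphic and $G(R)\not\subset C$, then $G$ lifts \emph{uniquely and holomorphically} to $B$ (the strict transform), including across the isolated points of $G^{-1}(C)$, where the lift is determined by the jet of $G$. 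So one never needs $G$ to avoid $C$; one needs only that $G(R)\not\subset C$ (the paper arranges this by adding an auxiliary interpolation point $b$ with $f(b)\notin E$) and that the lift inherits the approximation and interpolation properties. The latter requires the preparatory bookkeeping that your proposal omits: add the finitely many points of $K\cap f^{-1}(E)$ to the interpolation set $A$, and increase the interpolation order by one at every $a\in A$ with $f(a)\in E$, so that matching jets downstairs forces matching strict transforms upstairs to the required order and keeps the lift close to $f$ on $K$.

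The same omission infects the direction ``$B$ Oka-1 $\Rightarrow X$ Oka-1'': you cannot in general perturb $f$ so that $f(R)\cap C=\emptyset$, because if $f(a)\in C$ for some interpolation point $a\in A$ the $k(a)$-jet at $a$ must be preserved, and points of $K$ where the holomorphic map meets $C$ cannot be moved by a merely continuous perturbation either. The correct (and weaker) statement is that a small deformation fixing the jets at $A$ achieves $g^{-1}(Z)\subset A$, after which the continuous lifting exists by taking the local strict transform at those points. Finally, your homotopy verification (transporting the homotopy $G\simeq\pi\circ f$ through $\pi^{-1}$) requires the whole homotopy to avoid $C$, which is likewise not guaranteed; the clean argument is that $\pi$ induces an isomorphism $\pi_1(B)\to\pi_1(X)$, so for maps from an open Riemann surface (homotopy equivalent to a bouquet of circles) the homotopy classes upstairs and downstairs correspond bijectively. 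Your overall reduction-to-the-complement strategy is natural, but without the strict-transform lifting it does not close.
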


By the solution of the weak factorisation conjecture due to 
Abramovich et al.~\cite[Theorem 0.3.1]{Abramovich-et-al-2002}, 
a bimeromorphic map between compact complex manifolds can be factored 
into a finite sequence of blow-ups and blow-downs with smooth centres.  
Thus the following corollary to Theorem \ref{th:blow-up-and-down} is immediate.

\begin{corollary}\label{cor:bimeromorphic}
For compact complex manifolds, the Oka-1 property is bimeromorphically invariant.
\end{corollary}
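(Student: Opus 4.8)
The plan is to prove Theorem~\ref{th:blow-up-and-down} directly, and then Corollary~\ref{cor:bimeromorphic} follows formally from the weak factorisation theorem of Abramovich et al., exactly as indicated in the excerpt: a bimeromorphic map between compact complex manifolds factors as a finite chain of blow-ups and blow-downs with smooth centres, and Theorem~\ref{th:blow-up-and-down} says each such elementary step preserves the Oka-1 property in both directions, so the invariance propagates along the chain. So the real work is Theorem~\ref{th:blow-up-and-down}, and I would try to obtain it as a consequence of Theorem~\ref{th:updown} by showing that the blow-down map $\pi\colon B\to X$ is an Oka-1 map.

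First I would recall that $\pi$ restricts to a biholomorphism $B\setminus E\to X\setminus C$, where $C$ is the (smooth, possibly disconnected) centre and $E=\pi^{-1}(C)$ is the exceptional divisor, and over $C$ the map $\pi$ is a projectivised normal-bundle fibre bundle with fibres $\P^{r-1}$, $r=\codim C$. The key point is that $\pi$ is a holomorphic fibre bundle over $X\setminus C$ (trivially, with point fibre) and over $C$ (with fibre $\P^{r-1}$), but not globally a fibre bundle; nonetheless it is a Serre fibration, and I would verify condition~(ii) of Definition~\ref{def:Okamap} by a Mergelyan-type/Runge argument. Given an open Riemann surface $R$, a holomorphic map $g\colon R\to X$, and a continuous lift $f_0\colon R\to B$ holomorphic near a Runge compact $K$: the locus where $g$ meets $C$ is a closed discrete (generically empty) subset $A'\subset R$, and away from $A'$ the lift is forced to be $\pi^{-1}\circ g$, hence already holomorphic. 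Near each point of $A'$ the problem is local and reduces to lifting a disc $g\colon\D\to X$ through $\pi$ with prescribed jet at $0$; since $B$ is locally $\mathrm{Bl}_0\C^n$ and such a disc meeting the centre transversally lifts tautologically while the general case is handled by a small perturbation that does not move the prescribed jet, one can produce the global holomorphic lift $f$. This shows $\pi$ is an Oka-1 map.

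With $\pi$ an Oka-1 map, Theorem~\ref{th:updown}(a) gives: if $X$ is Oka-1 then $B$ is Oka-1. For the converse I would use Theorem~\ref{th:updown}(b): since $\P^{r-1}$ is simply connected and $\pi$ has a section over a nonempty open set (indeed $\pi$ is a biholomorphism over $X\setminus C$), the induced map $\pi_*\colon\pi_1(B)\to\pi_1(X)$ is surjective --- in fact an isomorphism, as blowing up along a submanifold of codimension $\ge 2$ does not change $\pi_1$. Hence if $B$ is Oka-1 then $X$ is Oka-1. This closes the equivalence and proves Theorem~\ref{th:blow-up-and-down}; Corollary~\ref{cor:bimeromorphic} is then immediate by weak factorisation as above.

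The main obstacle I anticipate is verifying condition~(ii) of Definition~\ref{def:Okamap} carefully when $g(R)$ meets the centre $C$, in particular when it meets $C$ tangentially or with high contact, and when the prescribed jet interpolation points of $K$ sit on the exceptional locus. One has to lift not just $g$ but also the homotopy, keeping it a homotopy through liftings of $g$, and ensure the approximation on $K$ and the finite-order agreement at the finitely many marked points survive the local perturbations near $A'=g^{-1}(C)$. Here I would lean on the explicit local model $\mathrm{Bl}_0\C^n\to\C^n$ together with a parametric Oka-principle argument (the fibres $\P^{r-1}$ being Oka, indeed the relevant local bundle $E\to C$ being an Oka map as a fibre bundle with Oka fibre, cf.\ Theorem~\ref{th:updown}(c)) to patch the local liftings into a global one by a Runge/Cartan pair induction; this is the step that requires genuine care rather than a formal citation.
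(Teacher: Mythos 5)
Your deduction of Corollary~\ref{cor:bimeromorphic} from Theorem~\ref{th:blow-up-and-down} via weak factorisation is exactly the paper's, so the issue is entirely with your proposed proof of Theorem~\ref{th:blow-up-and-down}, and there the approach has a fatal flaw at the outset: the blow-down $\pi\colon B\to X$ is \emph{not} a Serre fibration, contrary to your claim. A Serre fibration over a path-connected base has all fibres weakly homotopy equivalent, whereas $\pi$ has point fibres over $X\setminus Z$ and fibres $\P^{r-1}$, $r=\codim Z\geq 2$, over $Z$; concretely, even path lifting fails for a path entering $Z$, since off $Z$ the lift is forced to be $\pi^{-1}\circ\gamma$ and its limit at the entry time is dictated by the direction of approach, not by the prescribed initial point of $E$. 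Hence $\pi$ does not satisfy condition~(i) of Definition~\ref{def:Okamap}, it is not an Oka-1 map, and Theorem~\ref{th:updown}(a),(b) cannot be invoked. The difficulty is not merely formal: condition~(ii) is essentially vacuous for $\pi$, because a holomorphic $g\colon R\to X$ not mapping any component into $Z$ has a \emph{unique} lifting, so there is nothing to deform; the actual content of Theorem~\ref{th:blow-up-and-down} is to coordinate a deformation of $\pi\circ f$ \emph{downstairs} (where $\pi\circ f$ is only holomorphic near $K$) with jet conditions strong enough to control the unique lift, and this does not fit the Oka-1-map framework at all.

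The paper instead argues directly. For ``$X$ Oka-1 $\Rightarrow$ $B$ Oka-1'' it first \emph{prepares} $f\colon R\to B$: a small perturbation (transversality) ensures $f^{-1}(E)\cap K$ is finite; these points are added to the interpolation set $A$; and, crucially, the interpolation order $k(a)$ is increased by $1$ at every $a\in A\cap f^{-1}(E)$. This last device is what your sketch is missing and what makes the lift controllable: the position of the lift on the exceptional fibre $\P^{r-1}$ over a point of $Z$ is governed by the first-order behaviour of the curve downstairs relative to $Z$, so matching $(k+1)$-jets of $\pi\circ f$ at such a point is what guarantees matching of $k$-jets of the lifts. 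One then applies the Oka-1 property of $X$ to $\pi\circ f$, observes that the resulting holomorphic map has a unique holomorphic lift with the required approximation and interpolation, and checks the homotopy class via $\pi_1(B)\cong\pi_1(X)$ (this isomorphism you do state correctly). The converse direction is handled by perturbing $g\colon R\to X$ so that $g^{-1}(Z)\subset A$ (possible since $\codim_{\R}Z\geq 4>2=\dim_{\R}R$), lifting, applying the Oka-1 property of $B$, and composing with $\pi$. I recommend abandoning the Oka-1-map reduction and carrying out this direct argument.
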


\begin{proof}[Proof of Theorem \ref{th:blow-up-and-down}]
Let $\pi:B\to X$ be the projection, the submanifold $Z$ of $X$ be the centre of the blow-up, and $E=\pi^{-1}(Z)$ be the exceptional divisor in $B$.  
We may assume that $\codim Z\geq 2$.  Let $R$ be an open Riemann surface, $K\subset R$ be a Runge compact, and $A\subset R$ be discrete with a map $k:A\to\N$.  Assuming that $X$ is Oka-1, we take a continuous map $f:R\to B$ which is holomorphic on a neighbourhood of $K\cup A$.  We wish to deform $f$ to a holomorphic map $R\to B$ which approximates $f$ on $K$ and agrees with $f$ to order $k(a)$ for each $a\in A$.

In preparation, we first add a point $b\in R$ to $A$ away from $K$, set 
$k(b)=1$, and deform $f$ near $b$ so that $f$ is holomorphic near $b$ 
and $f(b)\notin E$.  
Denote the new $A$ and new $f$ by the same letters.  This is only to make 
sure that when we deform $\pi\circ f$ below, we do not end up with a map 
into $Z$.  Also, by the transversality theorem 
(see \cite{KalimanZaidenberg1996TAMS} or 
\cite[Theorem 8.8.13]{Forstneric2017E}), after applying an arbitrarily 
small deformation to $f$, we may assume that no connected component 
of the neighbourhood of $K$ on which $f$ is holomorphic maps into $E$. 
Then there are only finitely many points $x\in K\setminus A$ with $f(x)\in E$.  
We add every such $x$ to $A$ and set $k(x)=1$.  
Finally, for every $a\in A\cap f^{-1}(E)$, we add 1 to $k(a)$.

We now apply the Oka-1 property of $X$ to the map $g=g_0=\pi\circ f$ and 
obtain a homotopy $t\mapsto g_t:R\to X$, such that $g_1$ is holomorphic, 
approximates $g$ on $K$, and agrees with $g$ to order $k(a)$ at each 
$a\in A$.  The preparations above ensure that $g_1$ has a unique 
holomorphic lifting $f_1:R\to B$ by $\pi:B\to X$ with the required 
approximation and interpolation properties.  It remains to verify that $f_1$ 
is homotopic to $f$.  Since $R$ has 
the homotopy type of a bouquet of circles and deformation-retracts 
onto an embedded bouquet, the homotopy classes of continuous maps from 
$R$ to any space $Y$ are in bijective correspondence with homomorphisms 
$\pi_1(R) \to \pi_1(Y)$ of their fundamental groups.  As $\pi:B\to X$ induces 
an isomorphism $\pi_1(B) \to \pi_1(X)$ (this is easily shown using the van 
Kampen theorem), we conclude that $f$ and $f_1$ are homotopic.

Next, assuming that $B$ is Oka-1, we take a continuous map $g:R\to X$ 
which is holomorphic on a neighbourhood of $K\cup A$.  
We wish to deform $g$ to a holomorphic map $R\to X$ which 
approximates $g$ on $K$ and agrees with $g$ 
to order $k(a)$ for each $a\in A$.  Once we are able to lift $g$ to a
continuous map $f:R\to B$, which is holomorphic on a neighbourhood 
of $K\cup A$, we simply apply the Oka-1 property of $B$ to obtain a 
suitable deformation of $f$ and then compose with $\pi$.  
Since the submanifold $Z$ of $X$ has codimension at least $2>\dim R$, 
by applying an arbitrarily small deformation to $g$ fixing every
point $a\in A$ to order $k(a)$, we may assume that $g^{-1}(Z)\subset A$, and then the existence of a lifting $f$ is clear.
\end{proof}

In the algebraic setting, the analogous results are proved in exactly 
the same way, 
using the algebraic version of the universal property of the blow-up and the 
algebraic weak factorisation theorem of Abramovich et al. 
\cite[Theorem 0.1.1]{Abramovich-et-al-2002}.
This gives the following conclusions.

\begin{theorem}\label{th:blow-up-and-down-algebraic}
Let $B$ be the blow-up of an algebraic manifold $X$ along an algebraic 
submanifold, not necessarily connected.  Then $B$ is aOka-1 if and only 
if $X$ is aOka-1.
\end{theorem}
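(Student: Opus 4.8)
The plan is to run the proof of Theorem \ref{th:blow-up-and-down} with \emph{holomorphic map} replaced everywhere by \emph{morphism}, the holomorphic universal property of the blow-up replaced by its algebraic counterpart, and the source taken to be an affine algebraic Riemann surface. Write $\pi\colon B\to X$ for the projection, let $Z\subset X$ be the (possibly disconnected) centre of the blow-up, and $E=\pi^{-1}(Z)$ the exceptional divisor; we may assume $\codim Z\ge 2$. Fix an affine algebraic Riemann surface $R$ (treating connected components separately, so $R$ connected). Then $R$ has the homotopy type of a wedge of circles, so homotopy classes of continuous maps from $R$ into a space $Y$ correspond bijectively to homomorphisms $\pi_1(R)\to\pi_1(Y)$, and, by van Kampen's theorem together with $\codim Z\ge 2$, the induced homomorphism $\pi_*\colon\pi_1(B)\to\pi_1(X)$ is an isomorphism; hence $F'\mapsto\pi\circ F'$ is a bijection on homotopy classes of maps into $B$, which settles the homotopy clause of Definition \ref{def:aOka1} in both directions.

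Assume first that $X$ is aOka-1, and let $f\colon R\to B$ be continuous and holomorphic near a Runge compact $K$, with a finite set $A\subset K$, an order $k\in\N$, and $\epsilon>0$ given. First I would prepare $f$ as in the proof of Theorem \ref{th:blow-up-and-down}: by the transversality theorem (\cite{KalimanZaidenberg1996TAMS}; \cite[Theorem 8.8.13]{Forstneric2017E}) a small deformation makes $f$ transverse to the smooth hypersurface $E$, so $f^{-1}(E)$ is finite and $f$ meets $E$ to first order there; I enlarge $K$ by a point $b$ in an unbounded component of $R\setminus K$ (this preserves the Runge property of $K$) and deform $f$ near $b$ so that it becomes holomorphic near $b$ with $f(b)\notin E$; then I enlarge $A$ to contain $b$ and every point of $f^{-1}(E)\cap K$, and I raise the interpolation order by one at those points of the enlarged $A$ that lie in $f^{-1}(E)$. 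Applying the aOka-1 property of $X$ to $g=\pi\circ f$ produces a morphism $g_1\colon R\to X$ homotopic to $g$, uniformly close to $g$ on $K$, and agreeing with $g$ to the prescribed orders at the points of $A$. Since $g_1(b)=g(b)\notin Z$, we have $g_1(R)\not\subset Z$, so the inverse image ideal sheaf $g_1^{-1}\Ical_Z\cdot\Ocal_R$ is a nonzero ideal sheaf on the smooth curve $R$ and is therefore locally principal; the algebraic universal property of the blow-up then yields a unique \emph{morphism} $f_1\colon R\to B$ with $\pi\circ f_1=g_1$. As $\pi$ restricts to an isomorphism over $X\setminus Z$, the map $f_1$ is close to $f$ on $K$ away from the finitely many points of $f^{-1}(E)$, and near those points the first-order transversality, the extra unit of interpolation order, and Cauchy estimates applied to the uniform closeness of $g_1$ to $g$ show that $f_1$ agrees with $f$ to order $k$ at the points of $A$ and is close to $f$ there. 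Finally $\pi\circ f_1=g_1\simeq g=\pi\circ f$, so $f_1\simeq f$, and $B$ is aOka-1.

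For the reverse implication, assume $B$ is aOka-1 and let $g\colon R\to X$ be continuous and holomorphic near $K$, with $A\subset K$, $k\in\N$, $\epsilon>0$ given. Because $\codim Z\ge 2>\dim_{\C} R$, a small deformation of $g$ that fixes the $k$-jet of $g$ at each point of $A$ and changes $g$ arbitrarily little on $K$ can be arranged so that $g^{-1}(Z)\subset A$ and $g$ has finite contact order with $Z$ at those points; the (holomorphic) universal property of the blow-up on the curve $R$ then lifts $g$ to a continuous $f\colon R\to B$, holomorphic near $K$, with $\pi\circ f=g$. Applying the aOka-1 property of $B$ to $f$ gives a morphism $F\colon R\to B$ homotopic to $f$, close to $f$ on $K$, and agreeing with $f$ to order $k$ at $A$; then $\pi\circ F\colon R\to X$ is a morphism with the required approximation and interpolation properties (using once more that $\pi$ is an isomorphism off $E$), and concatenating the homotopies shows it is homotopic to $g$. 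Hence $X$ is aOka-1.

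The only step I expect to be non-formal is ensuring, in the first direction, that the morphism $g_1$ obtained from the aOka-1 property of $X$ does not map $R$ into the centre $Z$; otherwise the universal property of the blow-up produces a whole family of lifts rather than a distinguished one, and there is no canonical morphism $f_1$ to work with. This is exactly what the auxiliary interpolation point $b$ with $f(b)\notin E$, together with the transversality preparation and the one-unit increase of interpolation order, is designed to guarantee; once it is in place, the rest is formal, the structural inputs being only that $R$ is a smooth curve (so pulled-back ideal sheaves are automatically invertible), that $\pi\colon B\to X$ is an isomorphism over $X\setminus Z$, and that $\pi_*$ is an isomorphism of fundamental groups. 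No resolution of singularities or factorisation is needed for the theorem itself; the algebraic weak factorisation theorem enters only afterwards, in deducing that aOka-1 is a birational invariant of compact algebraic manifolds.
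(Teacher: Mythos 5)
Your proposal is correct and follows essentially the same route as the paper: the paper's proof of this theorem is precisely the instruction to rerun the proof of Theorem \ref{th:blow-up-and-down} in the algebraic category, using the algebraic universal property of the blow-up in place of the holomorphic one. Your added detail --- that on the smooth curve $R$ the nonvanishing of $g_1$ at the auxiliary point $b$ forces $g_1^{-1}\Ical_Z\cdot\Ocal_R$ to be a nonzero, hence invertible, ideal sheaf, so the lift $f_1$ is a morphism --- is exactly the content of that universal property, and you are also right that weak factorisation enters only in Corollary \ref{cor:birational}, not here.
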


\begin{corollary}\label{cor:birational}
For compact algebraic manifolds, the algebraic Oka-1 property is birationally invariant.
\end{corollary}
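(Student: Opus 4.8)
The plan is to deduce this exactly as Corollary \ref{cor:bimeromorphic} was deduced from Theorem \ref{th:blow-up-and-down}, but now in the algebraic category using Theorem \ref{th:blow-up-and-down-algebraic} as the local step. First I would invoke the algebraic weak factorisation theorem of Abramovich et al.\ \cite[Theorem 0.1.1]{Abramovich-et-al-2002}: any birational map $\varphi:X\dashrightarrow Y$ between compact (smooth, complete) algebraic manifolds factors as a finite chain $X = X_0 \dashrightarrow X_1 \dashrightarrow \cdots \dashrightarrow X_m = Y$, in which each step $X_{i}\dashrightarrow X_{i+1}$ (or its inverse) is the blow-up of a compact algebraic manifold along a smooth algebraic centre. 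Since aOka-1 is a property of the manifold alone, it suffices to show it is preserved along each individual link of the chain.

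The key step is then immediate from Theorem \ref{th:blow-up-and-down-algebraic}: if $B$ is the blow-up of an algebraic manifold $X$ along a smooth algebraic submanifold (not necessarily connected or irreducible), then $B$ is aOka-1 if and only if $X$ is aOka-1. Applying this at each link — reading the biconditional in whichever direction the factorisation presents the blow-up — propagates the aOka-1 property from $X_0$ all the way to $X_m$ and back, giving that $X$ is aOka-1 if and only if $Y$ is aOka-1. One should note that the intermediate varieties $X_i$ produced by weak factorisation are again compact smooth algebraic manifolds, so Theorem \ref{th:blow-up-and-down-algebraic} applies at every stage; this is the only hypothesis that needs checking.

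I do not expect a serious obstacle here: the corollary is a formal consequence of the two cited results. The only point requiring a word of care is that the factorisation theorem of Abramovich et al.\ is stated for varieties over an algebraically closed field of characteristic zero (in particular over $\C$) and yields blow-ups along smooth, but possibly reducible or disconnected, centres — which is precisely the generality in which Theorem \ref{th:blow-up-and-down-algebraic} is formulated, so the two interface without friction. Hence the proof is essentially one sentence: combine \cite[Theorem 0.1.1]{Abramovich-et-al-2002} with Theorem \ref{th:blow-up-and-down-algebraic}.
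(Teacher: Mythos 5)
Your proposal is correct and is exactly the paper's argument: the corollary is deduced by factoring a birational map into blow-ups and blow-downs with smooth centres via the algebraic weak factorisation theorem of Abramovich et al.\ and then applying Theorem \ref{th:blow-up-and-down-algebraic} at each link, just as Corollary \ref{cor:bimeromorphic} follows from Theorem \ref{th:blow-up-and-down}. Your remark that the centres may be disconnected, matching the generality of Theorem \ref{th:blow-up-and-down-algebraic}, is the right point of care and is also how the paper sets things up.
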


The corollary, combined with Royden's Runge approximation theorem 
for maps from Riemann surfaces to $\P^1$ 
\cite[Theorem 10]{Royden1967JAM}, shows the following. 

\begin{corollary}\label{cor:rational}
Every compact rational manifold is aOka-1.
In particular, the projective space $\P^n$ is aOka-1. 
\end{corollary}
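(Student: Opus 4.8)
The plan is to reduce the corollary to the single fact that $\P^1$ is aOka-1 --- which is essentially Royden's approximation theorem \cite[Theorem 10]{Royden1967JAM} --- and then to propagate that fact via birational invariance (Corollary~\ref{cor:birational}) together with a one-line product argument.

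First I would replace a compact rational manifold $X$ of dimension $n$ by the product $(\P^1)^n$. By definition, $X$ is birationally equivalent to $\P^n$. In turn $\P^n$ is birationally equivalent to $(\P^1)^n$: both contain $\C^n$ as a Zariski open dense subset --- $\C^n$ inside $\P^n$ as a standard affine chart, and $\C^n$ inside $(\P^1)^n$ via the inclusion $\C=\P^1\setminus\{\infty\}$ in each factor --- so the identity of $\C^n$ is a birational equivalence between them. By transitivity $X$ is birationally equivalent to $(\P^1)^n$, and since both are compact algebraic manifolds, Corollary~\ref{cor:birational} reduces the problem to showing that $(\P^1)^n$ is aOka-1.

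Next I would record the elementary fact that a finite product $X_1\times\cdots\times X_m$ of aOka-1 manifolds is aOka-1, obtained by applying Definition~\ref{def:aOka1} one coordinate at a time. Given an affine algebraic Riemann surface $R$, a Runge compact $K\subset R$, a finite set $A\subset K$, a continuous map $f=(f_1,\dots,f_m):R\to X_1\times\cdots\times X_m$ that is holomorphic on a neighbourhood of $K$, and $\epsilon>0$, $k\in\N$, each component $f_j:R\to X_j$ is continuous and holomorphic near $K$; the aOka-1 property of $X_j$ yields a morphism $F_j:R\to X_j$ homotopic to $f_j$, within $\epsilon$ of $f_j$ on $K$ (using a product distance and $\epsilon/m$ in each factor), and agreeing with $f_j$ to order $k$ at every point of $A$. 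Then $F=(F_1,\dots,F_m)$ is a morphism to the product with all the required properties, the homotopy from $f$ to $F$ being the product of the component homotopies. Taking each $X_j=\P^1$, this reduces everything to the case of $\P^1$.

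Finally I would prove that $\P^1$ is aOka-1. A morphism from an affine algebraic Riemann surface $R=\overline{R}\setminus\{p_1,\dots,p_r\}$ to $\P^1$ extends across the punctures to a morphism $\overline{R}\to\P^1$ (a rational map from a smooth curve to a projective variety is a morphism), so it is precisely a rational function on the compact curve $\overline{R}$; Royden's Runge theorem \cite[Theorem 10]{Royden1967JAM} then produces such a rational function approximating, uniformly on $K$ and as closely as desired, a given map $f:U\to\P^1$ that is holomorphic on a neighbourhood $U$ of $K$. The homotopy clause is vacuous, since $\P^1$ is simply connected and $R$ has the homotopy type of a finite bouquet of circles, and the jet interpolation at $A\subset K$ is added in the usual way: approximate $f$ together with its derivatives of order $\le k$ on a slightly larger compact, prescribe the required jets by a rational function on $\overline{R}$ via Riemann--Roch, and localize the correction with a small rational multiplier near $A$. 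Combining the three steps proves the corollary, and in particular $\P^n$ is aOka-1 because it is rational. I do not expect a genuine obstacle here: once Royden's theorem is invoked, everything else is bookkeeping, the one place needing more than a citation being the interpolation addendum just described, which could also be quoted from known Runge-type theorems with interpolation.
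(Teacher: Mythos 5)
Your proposal is correct and follows essentially the same route as the paper: establish that $\P^1$ is aOka-1 via Royden's theorem (with the homotopy clause vacuous by simple connectivity), pass to $(\P^1)^n$ componentwise, and conclude by birational invariance (Corollary~\ref{cor:birational}). The only minor difference is that the paper asserts the jet interpolation is already part of Royden's theorem, whereas you sketch how to add it by hand; this does not change the argument.
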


\begin{proof}
By Royden's theorem \cite[Theorem 10]{Royden1967JAM}, 
the Riemann sphere $\P^1$ is aOka-1. The required approximation and jet 
interpolation is precisely the content of his theorem.  The required homotopy 
is then immediate, because an open Riemann surface has the homotopy type 
of a bouquet of circles and $\P^1$ is simply connected. Applying this result 
componentwise, it follows that every product $(\P^1)^n$ of Riemann spheres 
is aOka-1. Since $(\P^1)^n$ is a rational manifold, 
Corollary \ref{cor:birational} concludes the proof.
\end{proof}

Corollary \ref{cor:rational} complements Theorem \ref{th:AE} in the
following section, which shows that every 
algebraically elliptic projective manifold is aOka-1. 
It is unknown whether every rational manifold is Oka, 
let alone algebraically elliptic.  

%
%
\section{Simply connected algebraically elliptic manifolds are 
aOka-1}\label{sec:AE}

In this section, we prove the following main result.

\begin{theorem}\label{th:AE}
Every simply connected algebraically elliptic manifold is aOka-1.
In particular, every algebraically elliptic projective manifold is aOka-1.
\end{theorem}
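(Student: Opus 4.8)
The plan is to reduce the assertion to Theorem~\ref{th:AHAP}, a multi-parameter version of the homotopy approximation theorem \cite[Theorem~3.1]{Forstneric2006AJM} for morphisms from affine algebraic curves to algebraically subelliptic manifolds (cf.\ the interpolatory version \cite[Theorem~6.4]{Forstneric2023Indag}). Fix the data of Definition~\ref{def:aOka1}: an affine algebraic Riemann surface $R$, a Runge (hence $\Oscr(R)$-convex) compact set $K\subset R$, a finite set $A\subset K$ regarded as a reduced closed subvariety, a continuous map $f\colon R\to X$ holomorphic near $K$, a number $\epsilon>0$, and an integer $k\in\N$; we may assume $R$ and $X$ connected. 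Since $X$ is simply connected and $R$ has the homotopy type of a bouquet of circles, every continuous map $R\to X$ is null-homotopic; in particular every morphism $R\to X$ is homotopic to $f$. Hence it suffices to produce a morphism $F\colon R\to X$ that approximates $f$ within $\epsilon$ on $K$ and agrees with $f$ to order $k$ at each point of $A$, the homotopy requirement being automatic.

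I would next record that an algebraically elliptic manifold is Gromov elliptic, hence an Oka manifold, and use this to set up a convenient contraction of $f$. Pick a Stein neighbourhood $U$ of $K$ on which $f$ is holomorphic. Since $X$ is simply connected, there is a continuous homotopy $g_t\colon R\to X$, $t\in[0,1]$, with $g_0$ constant (in particular a morphism) and $g_1=f$. As $X$ is Oka and $U$ is Stein, the parametric Oka principle with approximation lets us replace $(g_t|_U)_t$ by a homotopy of holomorphic maps $U\to X$ with the same endpoints and uniformly close to it; gluing these with $g_t$ away from $K$ in the standard way, I obtain a homotopy $f_t\colon R\to X$ with $f_0$ a constant morphism, $f_1=f$, and each $f_t$ holomorphic on a fixed neighbourhood of $K$.

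Applying Theorem~\ref{th:AHAP} to the family $(f_t)_{t\in[0,1]}$, relative to the parameter value $t=0$ where $f_0$ is already a morphism, produces a homotopy of morphisms $F_t\colon R\to X$ with $F_0=f_0$, with $F_t$ approximating $f_t$ arbitrarily closely on $K$, and with $F_t$ agreeing with $f_t$ to order $k$ on $A$ for all $t$. Then $F:=F_1$ is a morphism that approximates $f$ within $\epsilon$ on $K$ and matches its $k$-jets on $A$, and $F$ is homotopic to $f$ by the first paragraph; this proves the first assertion. For the second, it suffices to note that an algebraically elliptic projective manifold $X$ is simply connected: restricting a dominating algebraic spray $s\colon E\to X$ to a fibre $E_{x_0}\cong\C^N$ gives a morphism $\phi\colon\C^N\to X$ submersive at $0$, hence dominant, and for a general pair of points $x,y$ in its image the affine line through chosen $\phi$-preimages of $x$ and $y$ maps to a nonconstant morphism $\C\to X$, which extends to $\P^1\to X$ since $X$ is projective; thus a general pair of points of $X$ lies on a rational curve, so $X$ is rationally connected \cite{KollarMiyaokaMori1992} and therefore simply connected.

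The principal difficulty lies in Theorem~\ref{th:AHAP} itself: carrying out the algebraic Oka--Weil approximation step with jet interpolation --- for morphisms from the affine curve $R$ into an algebraically subelliptic target, by means of composed sprays --- and propagating it along an exhaustion (equivalently, the handle decomposition of $R$ relative to $K$) while carrying the homotopy parameter. A secondary technical point, used above, is that the members of the contraction $f_t$ can be kept holomorphic near $K$ throughout, which is exactly where Oka-ness of $X$ enters.
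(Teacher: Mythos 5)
There is a genuine gap at the central step. You apply Theorem~\ref{th:AHAP} to a one-parameter homotopy $(f_t)_{t\in[0,1]}$ from a constant morphism $f_0$ to $f_1=f$ and assert that the resulting morphisms $F_t$ can be made to \emph{agree with $f_t$ to order $k$ on $A$ for all $t$}. Theorem~\ref{th:AHAP} gives no such conclusion: it yields only uniform approximation on $K\times[0,1]^n$. Its interpolation addendum applies only when the homotopy is \emph{fixed} on a closed algebraic subvariety $\Sigma'$, and then it forces $F(x,t)=f(x)$ for $x\in\Sigma'$, where $f$ is the base morphism of the homotopy. In your setup the homotopy cannot be fixed on $A$ (it joins a constant map to $f$, and $f$ is not constant on $A$ in general), and even if one could invoke the addendum it would pin the values at $A$ to the constant $x_0$, not to the jets of $f$. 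So the jet-interpolation requirement of Definition~\ref{def:aOka1} is simply not addressed; approximation on $K$ alone does not give exact agreement of $k$-jets at the points of $A$.

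The paper's proof supplies precisely the missing idea. It embeds $f|_U$ in a smooth \emph{universal local deformation family} $\{f_t\}_{t\in Q}$, $Q=[-1,1]^n$ with $n=m(\dim_\R X+s)$ real parameters ($s$ the real dimension of the relevant $k$-jet space), so that the values and $k$-jets at the $m$ points of $A$ vary bijectively with $t$ near $t=0$. It then applies Theorem~\ref{th:AHAP} (via Remark~\ref{rem:AHAP}, using simple connectivity to connect $f_0$ to a constant morphism) to approximate the \emph{whole family} by an algebraic family $\Theta:R\times\C^n\to X$, and finally uses a topological degree argument to find a parameter $t\in\mathring Q_r$ at which the jets of $\Theta_t$ at $A$ coincide \emph{exactly} with those of $f_0$. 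Without this (or some substitute mechanism for converting approximate jets into exact jets), your argument proves only the approximation half of the aOka-1 property. Your observation that the homotopy condition is automatic by simple connectivity, and your alternative argument for simple connectivity of algebraically elliptic projective manifolds via rational connectedness (the paper instead cites unirationality and Serre's theorem), are both fine; the gap is confined to, but fatal for, the interpolation step.
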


Algebraically elliptic manifolds have already been mentioned in the 
introduction and in Section \ref{sec:bimeromorphic}. 
An algebraic manifold $X$, not necessarily
compact or quasi-projective, is said to be algebraically elliptic if
it admits a dominating algebraic spray $E\to X$ defined on the total space
of an algebraic vector bundle $E\to X$; 
see \cite[Definition 6.1 (a)]{Forstneric2023Indag}.
This notion was introduced and studied by Gromov \cite{Gromov1989}. 
The ostensibly bigger class of algebraically subelliptic manifolds 
(see \cite[Definition 6.1 (b)]{Forstneric2023Indag}) was implicitly 
present in Gromov's work but was formally introduced in 
\cite{Forstneric2002MZ}. 
Recently it was shown by Kaliman and Zaidenberg  
\cite[Theorem 1.1]{KalimanZaidenberg2024FM} that these two
classes coincide. However, it is not known whether the
same is true in the holomorphic category. 
It was recently shown by Arzhantsev et al.\ that every compact 
uniformly rational manifold is algebraically elliptic 
\cite[Theorem 1.3]{ArzhantsevKalimanZaidenberg2024}. 
Note that an algebraically elliptic projective manifold is unirational, 
and hence simply connected by Serre \cite{Serre1959}. 
This justifies the second statement in Theorem \ref{th:AE} provided that the first statement holds, and it complements Corollary \ref{cor:rational}, 
which says that a compact rational manifold is aOka-1. 

Rational manifolds and algebraically elliptic manifolds form two 
subclasses of the class of unirational manifolds.
It has recently been shown by Kaliman and Zaidenberg
\cite{KalimanZaidenberg2024X} that every smooth cubic hypersurface 
in $\P^{n}$, $n>2$, is algebraically elliptic. This seems to give 
the first examples of non-rational projective manifolds which are
algebraically elliptic. On the other hand, it is unknown whether 
every rational manifold is algebraically elliptic.  

Algebraic ellipticity is a Zariski local property, 
and is equivalent to several other algebraic Oka properties;
see the summary in 
\cite[Theorem 6.2 and Corollary 6.6]{Forstneric2023Indag}.
One is the {\em algebraic homotopy approximation property} 
\cite[Theorem 6.4]{Forstneric2023Indag} whose basic version
was proved by Forstneri\v c \cite[Theorem 3.1]{Forstneric2006AJM}. 
We shall need the following multi-parameter version of this result.
We consider the cube $[0,1]^n \subset \R^n \subset \C^n$ 
as a subset of $\C^n$ in a natural way.

%
%
%
\begin{theorem}  \label{th:AHAP}
Let $\Sigma$ be an affine algebraic variety and $X$ be an algebraically elliptic manifold.
Given a morphism $f:\Sigma \to X$, a compact holomorphically convex set 
$K$ in $\Sigma$, an open neighbourhood $U\subset \Sigma$ of $K$, 
and a homotopy of holomorphic maps 
\begin{equation}\label{eq:ft}
	f_t:U\to X,\quad  t=(t_1,t_2,\ldots,t_n) \in[0,1]^n
\end{equation}
with $f_0=f|_U$, there are morphisms $F: \Sigma \times \C^n \to X$ with 
$F(\cdotp,0)=f$ such that $F(\cdotp,t)$ approximates $f_t$ as closely as desired uniformly on $K\times [0,1]^n$. 

If in addition the homotopy $f_t$ is fixed on a closed algebraic subvariety 
$\Sigma'\subset \Sigma$ then $F$ can be chosen such that $F(x,t)=f(x)$ for 
all $x\in \Sigma'$ and $t\in\C^n$. 
\end{theorem}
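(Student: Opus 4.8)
The plan is to deduce the multi-parameter statement (Theorem \ref{th:AHAP}) from the known one-parameter / single-map algebraic homotopy approximation theorem \cite[Theorem 6.4]{Forstneric2023Indag} (equivalently \cite[Theorem 3.1]{Forstneric2006AJM}), by reformulating the $n$-parameter family as a single morphism from an enlarged affine variety. The key observation is that the cube $[0,1]^n$ sits inside $\C^n$ and that the data $(f,f_t)$ can be repackaged: $f$ is a morphism $\Sigma\to X$ and $f_t$ is a holomorphic map on $U\times[0,1]^n$ extending $f_0=f|_U$. First I would extend the homotopy $f_t$, which a priori is only defined for $t$ in the real cube, to a holomorphic map on $U\times W$ for some open neighbourhood $W\subset\C^n$ of $[0,1]^n$ — this is automatic, since a continuous (hence by the standing assumptions holomorphic-in-$x$, and we may arrange smooth-in-$t$) family can be approximated/extended to a genuine holomorphic map of two variables on a neighbourhood of $[0,1]^n$ after shrinking $U$; concretely one convolves in the $t$-variable or uses a Whitney-type holomorphic extension. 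Then $(x,t)\mapsto f_t(x)$ becomes a single holomorphic map $\tilde f:U\times W\to X$.

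Next I would set up the source variety for the one-parameter theorem as $\Sigma\times\C^n$, which is again affine (a product of affine varieties), with the compact holomorphically convex subset $\widetilde K = K\times[0,1]^n \subset \Sigma\times\C^n$ (a product of holomorphically convex sets is holomorphically convex) and the open neighbourhood $U\times W$. The morphism $f$ gives rise to the morphism $\hat f:\Sigma\times\C^n\to X$, $\hat f(x,t)=f(x)$, and we have the holomorphic map $\tilde f$ on the neighbourhood $U\times W$ of $\widetilde K$. To apply the algebraic homotopy approximation theorem on $\Sigma\times\C^n$ I need a homotopy from $\hat f|_{U\times W}$ to $\tilde f$: this is provided by $f^{(s)}(x,t) = f_{st}(x)$ for $s\in[0,1]$, a one-parameter homotopy of holomorphic maps on $U\times W$ (shrinking $W$ so that $sW\subset W$, or rescaling) interpolating between $f_0\equiv f$ and $\tilde f$. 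Feeding $(\hat f,\widetilde K,U\times W, f^{(s)})$ into \cite[Theorem 6.4]{Forstneric2023Indag} produces a morphism $F:\Sigma\times\C^n\to X$ (after absorbing the auxiliary homotopy parameter, whose total space we can take to be $\Sigma\times\C^n\times\C$ and then restrict to the slice $s=1$, or invoke the version of the theorem that already allows a parameter cube) with $F$ approximating $\tilde f$ uniformly on $\widetilde K = K\times[0,1]^n$ — which is exactly the desired conclusion, reading $F(x,t)$ for the morphism of the theorem and noting $F(\cdot,0)=f$ can be arranged since the homotopy $f^{(s)}$ fixes the slice $t=0$.

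For the relative statement, with the homotopy fixed over a closed algebraic subvariety $\Sigma'\subset\Sigma$, I would use the interpolatory (relative) version of the algebraic homotopy approximation theorem along the closed algebraic subvariety $\Sigma'\times\C^n\subset\Sigma\times\C^n$: since $f_t|_{\Sigma'\cap U}=f|_{\Sigma'\cap U}$ for all $t$, the homotopy $f^{(s)}$ is constant on $\Sigma'\times\C^n$, and the relative version of \cite[Theorem 6.4]{Forstneric2023Indag} (algebraic ellipticity is equivalent to the relative algebraic Oka property with interpolation, by \cite[Corollary 6.6]{Forstneric2023Indag}) yields $F$ with $F(x,t)=f(x)$ for all $x\in\Sigma'$, $t\in\C^n$.

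The main obstacle I anticipate is the first step: turning the given family $f_t$ — which is only assumed to be a continuous homotopy of holomorphic maps parametrised by the \emph{real} cube $[0,1]^n$ — into a genuine holomorphic map of $n+(\dim)$ complex variables on a neighbourhood of $[0,1]^n$ in $\C^n$, so that the machinery, which is stated for holomorphic data on affine source varieties, applies. This requires a mild smoothing/extension argument in the parameter: one first approximates the continuous family by a smooth (or real-analytic) one rel $t=0$ and rel $\Sigma'$, then extends real-analytically in $t$ to a complex neighbourhood; the target being a manifold, not $\C^N$, one does this in local charts of $X$ along the compact image and patches, using that $X$ is algebraically elliptic hence in particular dominable so that nearby maps are homotopic. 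Care is also needed that holomorphic convexity of $K\times[0,1]^n$ in $\Sigma\times\C^n$ is used correctly (it holds because each factor is holomorphically convex in a Stein ambient space). Once the data is in holomorphic form on an affine source, the rest is a direct, essentially formal, application of the already-cited one-parameter algebraic homotopy approximation theorem.
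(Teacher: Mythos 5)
Your argument is essentially sound, but it follows a genuinely different route from the paper's. The paper inducts on the number of parameters: it applies the one-parameter theorem \cite[Theorem 6.4]{Forstneric2023Indag} $n$ times in succession, each time on an enlarged affine variety $\Sigma\times\C^{j}$, and after each step readjusts the remaining homotopy (via Stein neighbourhoods of graphs and a partition of unity in the parameter) so that the slice already achieved becomes the new base morphism; in particular it never needs the family to be holomorphic in the parameters. You instead make a single application of the one-parameter theorem on $\Sigma\times\C^n$ with compact set $K\times[0,1]^n$ (which is indeed $\Oscr(\Sigma\times\C^n)$-convex), connecting $(x,t)\mapsto f(x)$ to $(x,t)\mapsto f_t(x)$ by the radial homotopy $f^{(s)}(x,t)=f_{st}(x)$. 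The price is twofold, and you have correctly identified the first: the given family is only continuous in $t\in[0,1]^n$, so you must first replace it by one holomorphic in $t$ on a neighbourhood $W$ of the cube that is star-shaped about $0$ (so that $sW\subset W$); your sketch — Stein neighbourhood of the graphs over a slightly larger compact, embedding into $\C^N$, polynomial approximation in $t$, holomorphic retraction, plus an affine correction to keep the slice $t=0$ equal to $f$ and to stay fixed on $\Sigma'$ — is exactly the device the paper itself uses for its intermediate readjustments, so this step is legitimate though not ``automatic''. The second price, which you mention only in passing, is that to get $F(\cdot,0)=f$ exactly you must invoke the relative (interpolation) version of the one-parameter theorem along $\Sigma\times\{0\}$ (and along $(\Sigma'\times\C^n)\cup(\Sigma\times\{0\})$ for the relative statement), whereas the paper's induction yields $F(\cdot,0)=f$ for free from the normalisation of each one-parameter application. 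Since the relative version is available in \cite[Theorem 6.4]{Forstneric2023Indag}, your proof goes through; in effect you trade the paper's $n$ rounds of homotopy adjustment for one upfront regularisation in the parameter plus a use of interpolation.
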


\begin{proof}
The case $n=1$ coincides with \cite[Theorem 6.4]{Forstneric2023Indag}.
We proceed by induction on $n$. Assume that $n>1$ and write the parameter 
in the form $t=(t_1,t')$, with $t_1\in [0,1]$ and $t'\in [0,1]^{n-1}$. 
Choose a compact $\Oscr(\Sigma)$-convex set $K_1\subset U$
containing $K$ in its interior. Applying the result for $n=1$ to $K_1$ and 
the parameter $t_1$ (with $t'=0$) gives a morphism $F_1:\Sigma\times \C\to X$ 
such that $F_1(\cdotp,0)=f$ and $F_1(\cdotp,t_1)$ approximates $f_{(t_1,0')}$ 
uniformly on $K_1\times [0,1]$. We now adjust the homotopy $f_t$ in \eqref{eq:ft}
(without changing the notation)
such that $f_{(t_1,0')} = F_1(\cdotp,t_1)$ for all $t_1\in [0,1]$
and the new homotopy is uniformly close to the original one on $K_1\times [0,1]^n$. 
This can be done by using the partition of unity in the parameter and
the existence of open Stein neighbourhoods of graphs of the maps $f_t$
over $K_1$; see e.g.\ \cite[proof of Proposition 6.7.2]{Forstneric2017E}, 
specifically the last paragraph on p.\ 289. 

We now replace $\Sigma$ by the affine variety $\Sigma_1=\Sigma\times \C$, the 
morphism $f$ by $F_1$, and the homotopy $f_t$ by the $(n-1)$-parameter 
homotopy $f_{(\cdotp,t_2,\ldots,t_n)}$ with $(t_2,\ldots,t_n)\in [0,1]^{n-1}$.
Applying the same argument as above with the parameter $t_2\in [0,1]$ and (if $n>2$) 
with $t_3=\cdots=t_n=0$ yields a morphism $F_2:\Sigma_2=\Sigma\times \C^2\to X$ 
satisfying the required properties with respect to the homotopy $f_{(t_1,t_2,0')}$. 
Clearly this process can be continued inductively, and in $n$ steps we obtain 
the desired morphism $F=F_n:\Sigma\times \C^n\to X$ satisfying the conclusion 
of the theorem. 

As pointed out in \cite[Theorem 6.4]{Forstneric2023Indag},
a minor modification of this argument also gives the addition with interpolation on 
an algebraic subvariety $\Sigma'\subset \Sigma$ on which the original homotopy 
$f_t$ is fixed. (We shall not need this addition in the present paper.) 
\end{proof}

\begin{remark}\label{rem:AHAP}
The hypothesis in Theorem \ref{th:AHAP} that the member $f_0$ of the 
family $\{f_t\}_{t\in [0,1]^n}$ is the restriction of a morphism $\Sigma\to X$
can be replaced by the hypothesis that some (and hence any) member of 
the family $\{f_t\}_{t\in [0,1]^n}$ is homotopic to the restriction $g|_U$ 
of a morphism $g:\Sigma\to X$. Since $X$ is an Oka manifold 
(see \cite[Corollary 5.6.14]{Forstneric2017E}) and assuming as we may that 
$U$ is Stein, the Oka principle gives a homotopy of holomorphic maps $U\to X$ 
connecting $g$ to the map $f_0$, and hence to any map $f_t$
in the family. The result then follows by rearranging 
the set of parameters of the extended homotopy into the cube $[0,1]^{n+1}$
so that $g|_U$ corresponds to the point $(0,\ldots,0)$ and $f_t$ 
corresponds to the point $(1,t)\in \{1\}\times [0,1]^n$.  
\end{remark}

\begin{proof}[Proof of Theorem \ref{th:AE}]
Let $R$ be an affine algebraic Riemann surface, $K$ be a Runge compact set in $R$,
$A=\{a_1,\ldots,a_m\}\subset K$ be a finite set, and $f:R\to X$ be a
continuous map which is holomorphic on a neighbourhood $U\subset R$ 
of $K$ (see Definition \ref{def:aOka1}). Our goal is to show that we can approximate 
$f$ on $K$ by morphisms $\tilde f:R\to X$ which agree with $f$ to a given 
finite order $k$ at every point of $A$. By enlarging $K$ slightly, 
we may assume that $A\subset \mathring K$.

Let $s$ denote the real dimension of the space of holomorphic
$k$-jets at the origin of maps from open neigbourhood of $0\in \C$ 
to $X$ sending $0$ to a chosen point $x_0\in X$. Set $n=m(\dim_\R X + s)$.
Note that the graph of $f$ over $K$ has an open Stein neighbourhood in $R\times X$
(indeed, it has an open Euclidean neighbourhood, see \cite{Forstneric2022JMAA}).
Hence, after shrinking $U$ around $K$ if necessary, we can find 
a smooth $n$-parameter family of holomorphic maps $f_t:U\to X$ for 
$t\in Q=[-1,1]^{n} \subset \R^n$ such that $f_0=f|_U$ and for every collection 
of values and $k$-jets near the values and $k$-jets of $f_0$ at the points
$a_j\in A$ $(j=1,\dots,m)$ there is precisely one member $f_t$ of this family which
assumes these values and $k$-jets at the points of $A$. Thus, $\{f_t\}$ is a universal local 
deformation family of $f_0$ with respect to the values and $k$-jets at the points of $A$.

Since $X$ is simply connected and $U$ has the homotopy type of a bouquet 
of circles, $f_0$ is homotopic to the restriction $g|_U$ of a
constant morphism $g:R\to x_0\in X$. 
By Theorem \ref{th:AHAP} and Remark \ref{rem:AHAP}, 
we can approximate the homotopy $\{f_t\}$ for  
$t\in Q=[-1,1]^{n}\subset \C^n$ as closely as desired on $K\times Q$ by a morphism 
$\Theta:R\times \C^n \to X$. Consider the morphisms 
$\Theta_t=\Theta(\cdotp,t):R\to X$ for $t\in Q$.
By approximation, the values and $k$-jets of $\Theta_t$ in the points of $A$ 
are arbitrarily close to those of $f_t$ for all $t\in Q$. Fix a number $0<r<1$. 
Assuming as we may that the approximation is close enough,
we obtain a continuous map $Q_r:=[-r,r]^n \ni t\mapsto \tau(t)\in Q$ close to the 
identity map such that the values and $k$-jets of $\Theta_t$ at the points of $A$ match 
those of $f_{\tau(t)}$, the set $\tau(bQ_r)\subset Q$ contains the center point $0$ in 
the bounded component of its complement, and the map
$\tau/|\tau|: bQ_r \to S^{n-1}$ to the $(n-1)$-sphere has topological degree $1$.
It follows that there is $t\in (-r,r)^n=\mathring Q_r$ such that $\tau(t)=0$
(see \cite[Proposition 3.2]{Forstneric2022APDE} and the references therein). 
For this $t$, the values and $k$-jets of the morphism $\tilde f=\Theta_t:R\to X$
in the points of $A$ match those of $f_0$. Since $r>0$ was arbitrary, $\tilde f$ 
may be chosen as close to $f$ as desired on $K$. This completes the proof.
\end{proof}

\begin{proof}[Proof of Corollary \ref{cor:Royden}] 
We may assume that $K$ is a proper subset of $\Sigma$, for otherwise
there is nothing to prove.  
By enlarging $K$ within $U$ we may assume it is a smoothly
bounded. Hence, $\Sigma\setminus K$ has finitely many connected components.
Choose a finite set $P\subset \Sigma\setminus K$ having a point in each 
component of $\Sigma\setminus K$. Then, $R=\Sigma\setminus P$ is
an affine Riemann surface and $K$ is $\Oscr(R)$-convex.
Since $X$ is simply connected, the map $f$ extends to continuous map
$f:R\to X$. By Theorem \ref{th:AE} and Corollary \ref{cor:birational}, 
$X$ enjoys the aOka-1 property,
which implies the existence of a morphism $\tilde f:R\to X$ 
satisfying the approximation and interpolation conditions in the corollary.
Since $X$ is compact, $\tilde f$ extends to a morphism $\Sigma\to X$.
\end{proof}

%
%
\section{Condition LSAP and its functorial properties}\label{sec:LSAP}

In this section, we study a property of a complex manifold, called 
the {\em local spray approximation property} (abbreviated LSAP), 
which was introduced by Alarc\'on and 
Forstneri\v c \cite[Definition 7.11]{AlarconForstneric2023Oka1}.
This property implies that the manifold is Oka-1. 
We prove some new functorial properties of the class of LSAP manifolds
(see Proposition \ref{prop:LSAPfunct} and Corollary \ref{cor:LSAPfibrebundle}), 
thereby extending the class of Oka-1 manifolds.

%
%
\begin{definition}\label{def:special-pair}
A pair of compact topological discs $D\subset D'\subset \C$ is 
a {\em special pair} if both discs have piecewise smooth boundaries 
and $D'\setminus \mathring D$ is a disc attached to $D$ along an 
arc in~$bD$.
\end{definition}

A holomorphic map $F:D \times \B^N\to X$ to a complex manifold $X$ 
is called a {\em holomorphic spray} of maps $D\to X$ with the core $f=F(\cdotp,0)$.  Here, $\B^N$ is the open unit ball in $\C^N$. 
Such a spray is said to be {\em dominating} if the partial derivative 
$\dfrac{\di}{\di t}\bigg|_{t=0}F(z,t):\C^N\to T_{f(z)}X$ is surjective for all $z\in D$. 

%
%
\begin{definition}[Definition 7.11 in \cite{AlarconForstneric2023Oka1}] 
\label{def:LSAP}
A complex manifold $X$ has the {\em local spray approximation property}, 
LSAP, at a point $x\in X$ if there is a neighbourhood $V\subset X$ of 
$x$ satisfying the following condition. Given a special pair of compact discs 
$D\subset D'\subset \C$ and a holomorphic spray $F:D\times \B^N\to V$, 
there is a number $r=r(F) \in (0,1)$ such that $F$ can be approximated
uniformly on $D\times r\overline\B^N$ by holomorphic 
sprays $D' \times r\overline\B^N\to X$.
The manifold $X$ has LSAP, or is an LSAP manifold, 
if this holds for every $x\in X$.
\end{definition}

%
%
\begin{remark}\label{rem:LSAP}
The sprays in Definition \ref{def:LSAP} can either be defined over unspecified
open neighbourhoods of the compact disc $D$, or else they could be continuous
over $D$ and holomorphic over its interior $\mathring D$. The application 
of this condition in the proofs of Propositions \ref{prop:LSAP} and 
\ref{prop:LSAPfunct} works in both cases, since the technique of gluing sprays 
on Cartan pairs works in both cases (see \cite[Sect.\ 5.9]{Forstneric2017E}). 
As noted in \cite[Remark 7.12]{AlarconForstneric2023Oka1},  
LSAP is equivalent to the ostensibly weaker condition that every 
{\em dominating} holomorphic spray $F:D\times \B^N\to V$ 
can be approximated by sprays on a given bigger disc 
$D'\supset D$ as in Definition \ref{def:LSAP}. 
\end{remark}

It is obvious that $\C^n$ is an LSAP manifold.
We recall the following results concerning the relationship between 
LSAP and Oka-1; see \cite[Proposition 7.13]{AlarconForstneric2023Oka1}. 

%
%
\begin{proposition}\label{prop:LSAP}
\begin{enumerate}[\rm (a)]
\item If a complex manifold $X$ satisfies LSAP at every point $x\in X\setminus E$ 
in the complement of a closed subset $E\subset X$ with $\Hcal^{2\dim X-1}(E)=0$, 
then $X$ is an Oka-1 manifold. In particular, every LSAP manifold is an 
Oka-1 manifold.
\item A complex manifold $X$ which is densely dominable by LSAP manifolds is Oka-1. 
\item In particular, if $f:X\to Y$ is a surjective holomorphic submersion 
and $X$ is an LSAP manifold, then $Y$ is an Oka-1 manifold.
\item A holomorphic fibre bundle $X\to Y$ with an LSAP fibre is an Oka-1 map.
\end{enumerate}
\end{proposition}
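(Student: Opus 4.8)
The plan is to let part (a) carry the weight and deduce (b), (c), (d) from it, following \cite[Proposition 7.13]{AlarconForstneric2023Oka1}. For (a) I would invoke the simplified characterisation of the Oka-1 property in \cite[Proposition 2.7]{AlarconForstneric2023Oka1} (recalled after Definition \ref{def:Oka1}): it is enough to approximate, with jet interpolation at finitely many points of $\mathring K$, a map $f$ holomorphic near $K'$ over a special pair of domains $K\subset K'=K\cup D$ in an open Riemann surface, with $D$ a disc attached to $K$ along a boundary arc $\alpha\subset bK$. The point of LSAP is that, once one has localised via the gluing-of-sprays calculus on Cartan pairs \cite[Chapter 5]{Forstneric2017E}, it is exactly the tool that executes the elementary step of enlarging the domain of holomorphy across a small disc.

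Here is how I would localise and handle the set $E$ where LSAP fails. Since $\Hcal^{2\dim X-1}(E)=0$, the arc $f(\alpha)$, being of Hausdorff dimension at most one, can be pushed off $E$ by an arbitrarily small perturbation of $f$ supported near $D$ and fixing the prescribed jets (one uses here that $E-(\text{arc})$ is a Lipschitz image of $E\times[0,1]$, which has vanishing $\Hcal^{2\dim X}$-measure, so almost every small translation works). Having arranged $\dist(f(\alpha),E)>0$, continuity of $f$ lets us pass to a thin neighbourhood of $\alpha$ in $R$ whose $f$-image lies in $\Omega:=\bigcup_{x\in X\setminus E}V(x)$, where $V(x)$ is an LSAP-neighbourhood of $x$ as in Definition \ref{def:LSAP}; note $X\setminus\Omega\subseteq E$. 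Subdividing $D$ finely into a chain of small discs glued along arcs, we reduce to steps in which the newly attached small disc, together with the relevant overlap, maps into a single coordinate chart $V=V(x)$ with $x\in X\setminus E$. In such a step I would form, in the chart coordinates, the dominating spray $F(z,t)=f(z)+\phi(z)t$ over a slight shrink of the new disc with values in $V$, where $\phi$ is a holomorphic function on $R$ vanishing to the prescribed orders at the interpolation points (so the interpolation is preserved for every $t$); extend $F$ across the new disc by the LSAP property at $x$ (cf.\ Remark \ref{rem:LSAP} for the permissible domains of the sprays); glue the resulting holomorphic spray with the spray already built over the remainder of the domain along the thin overlap, where the freshly built spray is dominating, after the usual fibrewise reparametrisation needed to make the two sprays agree there; and evaluate the glued spray at a parameter value close to $0$ to get a holomorphic map over the enlarged domain, still close to $f$ on $K$ and still matching the prescribed jets. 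Iterating over the chain, and then exhausting $R$ by special pairs, yields the required global holomorphic map. The ``in particular'' is the case $E=\varnothing$.

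For (b), the LSAP property passes through submersive domination: if $X$ is densely dominable by LSAP manifolds, then at every $x$ outside the exceptional set choose an LSAP manifold $Z$ and a holomorphic $h\colon Z\to X$ that is a submersion at some $y\in h^{-1}(x)$; a holomorphic spray into a small enough neighbourhood of $x$ lifts through the local submersion $h$, after enlarging the fibre dimension to keep it dominating, to a holomorphic spray into an LSAP-neighbourhood of $y$ in $Z$, which LSAP of $Z$ extends over the larger disc with values in $Z$, and composing with $h$ gives the required spray into $X$. So $X$ satisfies LSAP off a thin set and is Oka-1 by (a). Part (c) is the special case $Z=X$, $E=\varnothing$: a surjective holomorphic submersion $f\colon X\to Y$ with $X$ an LSAP manifold makes $Y$ everywhere dominable by the LSAP manifold $X$. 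For (d), let $\pi\colon X\to Y$ be a holomorphic fibre bundle with LSAP fibre $Z$; it is a Serre fibration, so condition (i) of Definition \ref{def:Okamap} holds. For (ii), pull the bundle back along $g\colon R\to Y$ to get a holomorphic fibre bundle $W=g^*X\to R$ over the open Riemann surface $R$ with fibre $Z$, under which the given continuous lifting $f_0$ of $g$, holomorphic near the Runge compact $K$, becomes a continuous section of $W$ holomorphic near $K$; since $W$ is locally holomorphically trivial, over every sufficiently small disc in $R$ its holomorphic sections are just holomorphic maps into the LSAP manifold $Z$, so the special-pair growth and spray-gluing argument of part (a) --- now applied to sections of $W$ and without any exceptional set --- deforms $f_0$ through sections of $W$, equivalently through liftings of $g$, to a holomorphic lifting approximating $f_0$ on $K$ with the prescribed jets. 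The resulting lifting is homotopic to $f_0$ because $R$ has the homotopy type of a bouquet of circles. Hence $\pi$ is an Oka-1 map.

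The main obstacle is the mismatch in part (a) between the weakness of LSAP and the global conclusion: LSAP extends a spray only over a \emph{slightly} larger disc, only inside a \emph{small} and a priori uncontrolled neighbourhood of a single point, and with an \emph{uncontrolled} shrinking $r(F)$ of the parameter ball. One must therefore subdivide the new disc into many small pieces, control the accumulated loss of parameter radius through the successive gluings, and preserve both the approximation on $K$ and the finitely many prescribed jets throughout --- while having used $\Hcal^{2\dim X-1}(E)=0$ only along the \emph{one-dimensional} attaching arcs (a two-dimensional disc cannot be pushed off a set of Hausdorff dimension $2\dim X-1$), so that the domination needed for gluing is available precisely on the thin overlap strips. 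Making these general-position perturbations coexist with the inductive bookkeeping is the delicate part; the rest is the standard Cartan-pair spray calculus of \cite[Chapter 5]{Forstneric2017E}.
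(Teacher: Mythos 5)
The paper does not prove this proposition at all: it is explicitly recalled from \cite[Proposition 7.13]{AlarconForstneric2023Oka1}, so there is no in-paper proof to compare against. Your reconstruction follows what is clearly the intended route, and it is consistent with the two hints the paper does give: the parenthetical remark that in the proof of (b) it suffices to assume LSAP of the dominating manifold at the single point $y$ (your spray-lifting argument through the local submersion is exactly the argument the paper then writes out for Proposition \ref{prop:LSAPfunct}(i), which it says is ``similar to the proof of part (b)''), and the reduction of (a) to the special-pair criterion of \cite[Proposition 2.7]{AlarconForstneric2023Oka1} with spray gluing on Cartan pairs. Two points where your write-up blurs the mechanics: first, pushing only $f(\alpha)$ off $E$ does not put $f(D)$ into $\Omega$, so either replace the continuous extension over $D$ (within its homotopy class rel a neighbourhood of $K$, using that $D$ retracts onto $\alpha$) by one that hugs $\alpha$, or, as your closing paragraph suggests, reapply the general-position perturbation to the \emph{current} attaching arc before each of the finitely many subdivision steps --- the images of the already-attached small discs are not controlled to lie in $\Omega$, so the one-time perturbation of $\alpha$ is not enough. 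Second, the spray $F(z,t)=f(z)+\phi(z)t$ fed to LSAP must be formed over the thin strip in the \emph{already holomorphic} part of the domain abutting the attaching arc (the smaller disc of the special pair in Definition \ref{def:LSAP}), not ``over a slight shrink of the new disc'' where $f$ is not yet holomorphic; with that reading, and noting that rebuilding the spray from the evaluated map at each step prevents any accumulation of the loss of parameter radius $r(F)$, your argument for all four parts is sound.
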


The condition in part (b) means that there is a closed subset
$E\subset X$ satisfying the condition $\Hcal^{2\dim X-1}(E)=0$ 
in part (a) such that for every point $x\in X\setminus E$, there exist an LSAP 
manifold $Y$ (possibly depending on $x$), a holomorphic map $h:Y\to X$, and a point 
$y\in Y$ such that $h(y)=x$ and $dh_y:T_yY\to T_xX$ is surjective. 
(In the proof of (b), it suffices to assume that 
$Y$ satisfies LSAP at $y$.) We say that $X$ is {\em strongly dominable} 
by LSAP manifolds if the above condition holds at every point $x\in X$.

We now show the following functorial properties of LSAP.
Recall also that every LSAP manifold is an Oka-1 manifold;
see Proposition \ref{prop:LSAP} (a).

%
%
\begin{proposition}\label{prop:LSAPfunct}
Let $X$ and $Y$ be complex manifolds. 
\begin{enumerate}[\rm (i)]
\item If $X$ is strongly dominable by LSAP manifolds, then $X$ is an LSAP manifold. 
In particular, if $Z$ is an LSAP manifold and $Z\to X$ is a surjective
holomorphic submersion, then $X$ is an LSAP manifold.
\item If $h:X\to Y$ is a surjective Oka map and $Y$ is an LSAP manifold, 
then $X$ is an LSAP manifold.
\item LSAP is a Hausdorff-local property, that is, if a complex manifold $X$ is covered by sets that are open in the Hausdorff topology and satisfy LSAP, then $X$ satisfies LSAP.
\end{enumerate}
\end{proposition}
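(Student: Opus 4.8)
The plan is to prove the three parts in the order (iii), (i), (ii), since (iii) is the most elementary and the gluing technique it isolates is reused in the others. For part (iii), suppose $X$ is covered by Hausdorff-open sets $\{W_\alpha\}$ each satisfying LSAP. Fix a point $x\in X$ and an index $\alpha$ with $x\in W_\alpha$. Since $W_\alpha$ satisfies LSAP, there is a neighbourhood $V\subset W_\alpha$ of $x$ witnessing the LSAP condition \emph{relative to $W_\alpha$}; I claim the same $V$ (shrunk if necessary so that $\overline V\subset W_\alpha$) witnesses LSAP for $X$ at $x$. Indeed, given a special pair $D\subset D'$ and a holomorphic spray $F:D\times\B^N\to V$, LSAP for $W_\alpha$ produces $r\in(0,1)$ and approximating sprays $G:D'\times r\overline\B^N\to W_\alpha$; composing with the inclusion $W_\alpha\hookrightarrow X$ gives sprays into $X$. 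The only subtlety is that ``LSAP at $x$ in $W_\alpha$'' provides a neighbourhood $V$ \emph{in $W_\alpha$}, which is automatically a neighbourhood in $X$ because $W_\alpha$ is Hausdorff-open in $X$; this is exactly where Hausdorff-openness (as opposed to some weaker locality notion) is used. So (iii) is essentially a bookkeeping argument once one unwinds Definition \ref{def:LSAP}.

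For part (i), it suffices by (iii) to show: if $h:Z\to X$ is a holomorphic map, $y\in Z$, $h(y)=x$, $dh_y$ is surjective, and $Z$ satisfies LSAP at $y$, then $X$ satisfies LSAP at $x$ (the ``in particular'' clause is the special case of a global submersion). By the rank theorem, after shrinking we may factor $h$ near $y$ as a projection $Z\supset\Omega\cong V\times B\to V$ onto a neighbourhood $V\subset X$ of $x$, where $B$ is a ball in some $\C^d$; so we may assume $Z$ locally is $V\times B$ with the LSAP witness-neighbourhood of $y=(x,0)$ of the form $V'\times B'$. Now given a special pair $D\subset D'$ and a holomorphic spray $F:D\times\B^N\to V'$, lift it trivially to the spray $\hat F(z,t)=(F(z,t),0)\in V'\times B'\subset Z$, apply LSAP of $Z$ at $y$ to obtain $r$ and approximating sprays $\hat G:D'\times r\overline\B^N\to Z$, and push down by $h$: the sprays $h\circ\hat G:D'\times r\overline\B^N\to X$ approximate $F$ on $D\times r\overline\B^N$. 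Since $h$ is an open map near $y$, $h(\text{witness nbhd of }y)$ contains a neighbourhood of $x$, which serves as the LSAP witness for $X$. One should check that $h\circ\hat G$ really does land near $F$ uniformly — immediate from continuity of $h$ — and that the domain of $\hat G$ can be arranged to map into the coordinate chart, which follows by taking the approximation close enough on the compact $D\times r\overline\B^N$.

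For part (ii), the hypothesis is that $h:X\to Y$ is a surjective Oka map with $Y$ an LSAP manifold. Recall an Oka map is, in particular, a surjective submersion with the property that continuous liftings of maps from Stein sources (here discs and polydiscs, which are Stein and contractible) through $h$ can be deformed, rel approximation and interpolation, to holomorphic liftings. Fix $x\in X$, put $y=h(x)$, and let $V_Y\subset Y$ be an LSAP witness neighbourhood of $y$; since $h$ is a submersion we may choose local holomorphic coordinates in which $h$ is a projection $V_X\cong V_Y\times B\to V_Y$ near $x$, with $V_X\subset X$ a neighbourhood of $x$. Given a special pair $D\subset D'$ and a holomorphic spray $F:D\times\B^N\to V_X$, compose with $h$ to get a holomorphic spray $h\circ F:D\times\B^N\to V_Y$; by LSAP of $Y$ there are $r\in(0,1)$ and holomorphic sprays $G:D'\times r\overline\B^N\to Y$ approximating $h\circ F$ on $D\times r\overline\B^N$. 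The map $F$ restricted to $D\times r\overline\B^N$ is a holomorphic — hence continuous — lifting of $(h\circ F)|_{D\times r\overline\B^N}$ through $h$; it extends continuously over $D'\times r\overline\B^N$ (using contractibility of the disc $D'$ and of the parameter ball, lifting the homotopy from $h\circ F$ to $G$ through the Serre fibration $h$) to a continuous lifting $\tilde F$ of $G$ that is holomorphic near $D\times r\overline\B^N$. Now apply the Oka property of the map $h$ over the Stein set $D'\times r\overline\B^N$ (more precisely over a Stein neighbourhood of it, or work with the continuous-up-to-the-boundary version using a Cartan-pair gluing as in \cite[Sect.\ 5.9]{Forstneric2017E} and Remark \ref{rem:LSAP}): deform $\tilde F$ through liftings of $G$ to a \emph{holomorphic} lifting $\widehat F:D'\times r\overline\B^N\to X$ approximating $\tilde F$, hence $F$, on $D\times r\overline\B^N$. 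This $\widehat F$ is the desired approximating spray into $X$, and $h(V_X)=V_Y$ being open makes $V_X$ a valid witness neighbourhood; whence $X$ satisfies LSAP.

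The main obstacle is part (ii): one must carefully invoke the lifting property of an Oka map in the \emph{spray} setting, where the source is not an abstract Stein manifold but a product of a disc with a parameter ball, and where the sprays may only be continuous up to the boundary of $D$. The right way to handle this is to reduce to the interior (holomorphic sprays over open neighbourhoods of $D$) via the Cartan-pair gluing machinery, exactly as flagged in Remark \ref{rem:LSAP}, so that the definition of an Oka map applies verbatim; the domination hypothesis in Remark \ref{rem:LSAP} lets us restrict to dominating sprays if convenient, though it is not strictly needed here. Parts (i) and (iii) are routine once the bookkeeping of ``witness neighbourhoods'' is set up, and (i) is the prototype for the short proof of Theorem \ref{th:class} that follows.
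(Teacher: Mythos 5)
Parts (i) and (iii) are correct and follow essentially the same route as the paper: for (i) you factor the dominating map locally as a projection $V\times W\to V$, lift the spray with constant fibre component into the LSAP witness neighbourhood upstairs, approximate there, and push down; (iii) is the bookkeeping you describe. Your framing of (i) as a consequence of (iii) is harmless but unnecessary, since LSAP is already a pointwise condition.

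In part (ii) there is a genuine gap at the step where you produce the lifting $\tilde F$ of $G$ on $D'\times r\overline\B^N$ that is ``holomorphic near $D\times r\overline\B^N$'' and close to $F$ there. Your construction lifts the homotopy from $h\circ F$ to $G$ through the Serre fibration $h$, but homotopy lifting only yields a \emph{continuous} lifting of $G$; nothing in that argument makes the result holomorphic on a neighbourhood of $D\times r\overline\B^N$, and you cannot instead keep $F$ itself there because $F$ lifts $h\circ F$, not $G$. This holomorphy is not a technicality: the Oka property of $h$ only gives approximation on the set where the initial continuous lifting is already holomorphic, so without it the final holomorphic lifting $F'$ need not be close to $F$ on $D\times r\overline\B^N$ and the LSAP conclusion fails. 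The paper fills exactly this step by first constructing a \emph{holomorphic} lifting $\wt F$ of $\wt G$ on $D\times r\overline\B^N$ uniformly close to $F$, using a holomorphic family of holomorphic retractions onto the fibres of the submersion $h$ (as in the proof of Theorem 3.15 and Lemma 3.16 of \cite{Forstneric2023Indag}), and only then extends $\wt F$ continuously to $D'\times r\overline\B^N$ via the topological fibration property before invoking the Oka property of $h$. Your final paragraph flags the Cartan-pair and boundary-regularity issues, which are real but secondary; the missing ingredient is the holomorphic perturbation of the lifting from the base map $h\circ F$ to its approximant $G$.
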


\begin{proof}
The proof of (i) is similar to the proof of part (b) in Proposition \ref{prop:LSAP};
we include the details since they were not given in
\cite{AlarconForstneric2023Oka1}. 

Pick a point $x\in X$. By the hypothesis
there are a holomorphic map $h:Y\to X$ and a point $y\in Y$ such that 
$h(y)=x$, $h$ is submersive at $y$, and $Y$ satisfies LSAP at $y$.
Let $U_0\subset Y$ be a neighbourhood of $y$ such that the condition in 
Definition \ref{def:LSAP} holds for sprays of discs in $Y$ with values in $U_0$. 
Since $h$ is a submersion at $y$, the implicit function theorem gives 
a smaller neighbourhood $U\subset U_0$ of $y$ such that the restriction 
$h:U\to V:=h(U)\subset X$ is 
isomorphic to the projection $U\cong V\times W\to V$
by a fibre-preserving map, where $W$ is a neighbourhood of $y$ in 
the fibre $Z=h^{-1}(x)$. In this local trivialisation we write $y=(x,z)$ with $z\in Z$.  
Let $D\subset D'$ be a special pair of discs (see Definition \ref{def:special-pair}) 
and $F:D\times\B^N\to V$ be a holomorphic spray. 
Let $G=(F,z):D\times\B^N\to U\cong V\times W$ 
be the lifting of $F$ with the constant second component $z$.
By the assumption, there is a number $0<r<1$ such that $G$ can be 
approximated on $D\times r\B^N$ by holomorphic maps
$\wt G: D' \times r\B^N \to Y$. The holomorphic map 
$\wt F=h\circ \wt G: D' \times r\B^N \to X$ then approximates $F$
on $D\times r\B^N$. This shows that $X$ has LSAP at the point $x$.
Since $x$ was arbitrary, $X$ has LSAP.

To prove (ii), fix a point $x\in X$ and set $y=h(x)\in Y$. Choose a neighbourhood
$V\subset Y$ of $y$ such that the condition in Definition \ref{def:LSAP} holds 
for sprays of discs with values in $V$. Pick a neighbourhood
$U\subset X$ of $x$ such that $h(U)\subset V$.
Let $D\subset D'\subset \C$ be a special pair 
of discs and $F:D\times\B^N\to U$ be a holomorphic spray. 
By the assumption, there is a number $0<r<1$ such that the spray 
$G=h\circ F: D\times\B^N\to V$ can be approximated on 
$D\times r\overline \B^N$ by holomorphic maps  
$\wt G: D' \times r\overline \B^N \to Y$.
Assuming that the approximation is close enough, we see as in 
\cite[proof of Theorem 3.15]{Forstneric2023Indag} 
that there is a holomorphic map 
$\wt F: D\times r\overline \B^N  \to X$ which is uniformly close to $F$ on 
$D\times r\overline \B^N$ and satisfies $h\circ \wt F=\wt G$, 
that is, $\wt F$ is a lifting of $\wt G$ on $D\times r\overline \B^N$ 
with respect to the map $h:X\to Y$. 
(The construction of $\wt F$ uses a holomorphic family 
of holomorphic retractions on the fibres of $h$, provided by 
\cite[Lemma 3.16]{Forstneric2023Indag}.) Since $h$ is a topological fibration, 
$\wt F$ extends to a continuous lifting of $\wt G$ on $D'\times r\overline \B^N$.
Finally, since $h:X\to Y$ is an Oka map, we can approximate 
$\wt F$ on $D\times r\overline \B^N$ by holomorphic liftings 
$F':D'\times r\overline \B^N\to X$ of $\wt G:D'\times r\overline \B^N\to Y$. 
This shows that $X$ has LSAP at the point $x$.
Since $x$ was arbitrary, we conclude that $X$ is an LSAP manifold.

Assertion (iii) is evident from the definition of LSAP and is also 
a consequence of (i).
\end{proof}

Since a holomorphic fibre bundle with Oka fibre is an Oka map,
we have the following corollary to Proposition \ref{prop:LSAPfunct}.

\begin{corollary}\label{cor:LSAPfibrebundle}
If $X\to Y$ is a holomorphic fibre bundle with Oka fibre, then $X$ has LSAP
if and only if $Y$ has LSAP.  This holds in particular if $X\to Y$ is a holomorphic
covering map.
\end{corollary}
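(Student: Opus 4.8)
The plan is to read this off Proposition \ref{prop:LSAPfunct}, using the standard fact recalled just above the corollary that a holomorphic fibre bundle $h\colon X\to Y$ with Oka fibre is an Oka map; in particular such an $h$ is a surjective holomorphic submersion (and a Serre fibration). With this in hand, both implications are formal.

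First I would treat the implication ``$Y$ has LSAP $\Rightarrow$ $X$ has LSAP''. Since $h\colon X\to Y$ is a surjective Oka map and $Y$ is an LSAP manifold, part (ii) of Proposition \ref{prop:LSAPfunct} applies verbatim and gives that $X$ is an LSAP manifold. For the converse, ``$X$ has LSAP $\Rightarrow$ $Y$ has LSAP'', I would use that the bundle projection $h\colon X\to Y$ is a surjective holomorphic submersion, so the ``in particular'' clause of part (i) of Proposition \ref{prop:LSAPfunct} (with $Z=X$ in the role of the LSAP source and $Y$ in the role of the target) shows that $Y$ is an LSAP manifold. This establishes the equivalence.

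For the last sentence I would observe that a holomorphic covering map $X\to Y$ is precisely a holomorphic fibre bundle whose fibre is a discrete set of points; since a single point, and hence any $0$-dimensional complex manifold, is an Oka manifold, the fibre is Oka and the equivalence just proved applies. I do not expect any real obstacle here: the only point requiring a word of care is the invocation of the quoted fact that a holomorphic fibre bundle with Oka fibre is an Oka map (which also supplies, in both directions, the surjectivity and submersivity of $h$ needed to match the hypotheses of Proposition \ref{prop:LSAPfunct}); granting that, everything is bookkeeping.
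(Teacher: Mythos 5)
Your proposal is correct and is essentially the paper's own argument: the paper derives the corollary from Proposition \ref{prop:LSAPfunct} via the remark that a holomorphic fibre bundle with Oka fibre is an Oka map, using part (ii) for the direction $Y\Rightarrow X$ and the ``in particular'' clause of part (i) (the projection being a surjective holomorphic submersion) for the converse, with the covering-map case following since discrete fibres are Oka.
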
 

\begin{problem}
Does the conclusion of Corollary \ref{cor:LSAPfibrebundle} hold under the weaker
assumption that the fibre of $X\to Y$ is an LSAP manifold?
\end{problem}

%
%
%
%
\smallskip
\noindent {\bf Acknowledgements.} 
Forstneri\v c is supported by the European Union (ERC Advanced grant HPDR, 101053085) 
and grants P1-0291, J1-3005, and N1-0237 from ARIS, Republic of Slovenia. 
A part of the work on this paper was done during his visit to the University
of Adelaide in February 2024, and he wishes to thank this institution for 
hospitality. The authors wish to thank Erlend F.\ Wold for consultations 
which led to the proof of Proposition \ref{prop:L}.  
They also thank two referees for carefully reading the paper and 
suggesting several improvements.



\begin{thebibliography}{10}

\bibitem{Abramovich-et-al-2002}
D.~Abramovich, K.~Karu, K.~Matsuki, and J.~W{\l}odarczyk.
\newblock Torification and factorization of birational maps.
\newblock {\em J. Amer. Math. Soc.}, 15(3):531--572, 2002.

\bibitem{AlarconForstneric2023Oka1}
A.~Alarc\'{o}n and F.~Forstneri\v{c}.
\newblock Oka-1 manifolds, 2023.
\newblock \url{https://arxiv.org/abs/2303.15855}.

\bibitem{AndersenLempert1992}
E.~Anders{\'e}n and L.~Lempert.
\newblock On the group of holomorphic automorphisms of {${\mathbb C}^n$}.
\newblock {\em Invent. Math.}, 110(2):371--388, 1992.

\bibitem{AndristForstnericRitterWold2016}
R.~B. Andrist, F.~Forstneri\v{c}, T.~Ritter, and E.~F. Wold.
\newblock Proper holomorphic embeddings into {S}tein manifolds with the density
  property.
\newblock {\em J. Anal. Math.}, 130:135--150, 2016.

\bibitem{AndristWold2014}
R.~B. Andrist and E.~F. Wold.
\newblock Riemann surfaces in {S}tein manifolds with the density property.
\newblock {\em Ann. Inst. Fourier (Grenoble)}, 64(2):681--697, 2014.

\bibitem{ArzhantsevKalimanZaidenberg2024}
I.~Arzhantsev, S.~Kaliman, and M.~Zaidenberg.
\newblock Varieties covered by affine spaces, uniformly rational varieties and
  their cones.
\newblock {\em Adv. Math.}, 437:18, 2024.
\newblock Id/No 109449.

\bibitem{CampanaWinkelmann2023}
F.~Campana and J.~Winkelmann.
\newblock Dense entire curves in rationally connected manifolds
(with an appendix by J\'anos Koll\'ar).
\newblock {\em Alg.\ Geom.}, 10(5):521--553, 2023.  

\bibitem{Debarre2001}
O.~Debarre.
\newblock {\em Higher-dimensional algebraic geometry}.
\newblock Universitext. New York, NY: Springer, 2001.

\bibitem{DrinovecForstneric2007DMJ}
B.~Drinovec~Drnov\v{s}ek and F.~Forstneri\v{c}.
\newblock Holomorphic curves in complex spaces.
\newblock {\em Duke Math. J.}, 139(2):203--253, 2007.

\bibitem{Forstneric2002MZ}
F.~Forstneri\v{c}.
\newblock The {O}ka principle for sections of subelliptic submersions.
\newblock {\em Math. Z.}, 241(3):527--551, 2002.

\bibitem{Forstneric2006AJM}
F.~Forstneri\v{c}.
\newblock Holomorphic flexibility properties of complex manifolds.
\newblock {\em Amer. J. Math.}, 128(1):239--270, 2006.

\bibitem{Forstneric2017E}
F.~Forstneri\v{c}.
\newblock {\em Stein manifolds and holomorphic mappings (The homotopy principle
  in complex analysis)}, vol.~56 of {\em Ergebnisse der Mathematik und ihrer
  Grenzgebiete. 3. Folge}.
\newblock Springer, Cham, second ed., 2017.

\bibitem{Forstneric2019JAM}
F.~Forstneri\v{c}.
\newblock Proper holomorphic immersions into {S}tein manifolds with the density
  property.
\newblock {\em J. Anal. Math.}, 139(2):585--596, 2019.

\bibitem{Forstneric2022JMAA}
F.~Forstneri\v{c}.
\newblock Euclidean domains in complex manifolds.
\newblock {\em J. Math. Anal. Appl.}, 506(1):Paper No. 125660, 17, 2022.

\bibitem{Forstneric2022APDE}
F.~Forstneri\v{c}.
\newblock Mergelyan approximation theorem for holomorphic {Legendrian} curves.
\newblock {\em Anal. PDE}, 15(4):983--1010, 2022.

\bibitem{Forstneric2023Indag}
F.~Forstneri\v{c}.
\newblock Recent developments on {Oka} manifolds.
\newblock {\em Indag. Math., New Ser.}, 34(2):367--417, 2023.

\bibitem{ForstnericKusakabe2023X}
F.~Forstneri\v{c} and Y.~Kusakabe.
\newblock Oka tubes in holomorphic line bundles.
\newblock {\em arXiv e-prints}, 2023.
\newblock \url{https://arxiv.org/abs/2310.14871}.

\bibitem{ForstnericKutzschebauch2022}
F.~Forstneri\v{c} and F.~Kutzschebauch.
\newblock The first thirty years of {Anders{\'e}n}-{Lempert} theory.
\newblock {\em Anal. Math.}, 48(2):489--544, 2022.

\bibitem{ForstnericLarusson2011}
F.~Forstneri\v{c} and F.~L{\'a}russon.
\newblock Survey of {O}ka theory.
\newblock {\em New York J. Math.}, 17A:11--38, 2011.

\bibitem{ForstnericRosay1993}
F.~Forstneri\v{c} and J.-P. Rosay.
\newblock Approximation of biholomorphic mappings by automorphisms of
  {${\mathbb C}^n$}.
\newblock {\em Invent. Math.}, 112(2):323--349, 1993.
\newblock {Erratum: {\it Invent. Math.}, 118(3):573--574, 1994}.

\bibitem{Gournay2012}
A.~Gournay.
\newblock A {Runge} approximation theorem for pseudo-holomorphic maps.
\newblock {\em Geom. Funct. Anal.}, 22(2):311--351, 2012.

\bibitem{Gromov1989}
M.~Gromov.
\newblock Oka's principle for holomorphic sections of elliptic bundles.
\newblock {\em J. Amer. Math. Soc.}, 2(4):851--897, 1989.

\bibitem{KalimanZaidenberg1996TAMS}
S.~Kaliman and M.~Za{\u\i}denberg.
\newblock A transversality theorem for holomorphic mappings 
and stability of {E}isenman-{K}obayashi measures.
\newblock {\em Trans. Amer. Math. Soc.}, 348(2):661--672, 1996.

\bibitem{KalimanZaidenberg2024FM}
S.~Kaliman and M.~Zaidenberg.
\newblock Gromov ellipticity and subellipticity.
\newblock {\em Forum Math.}, 36(2):373--376, 2024.

\bibitem{KalimanZaidenberg2024X}
S.~Kaliman and M.~Zaidenberg.
\newblock Algebraic {G}romov's ellipticity of cubic hypersurfaces.
\newblock 2024.
\newblock \url{https://arxiv.org/abs/2402.04462}.


\bibitem{Kollar1991}
J.~Koll{\'a}r.
\newblock Flips, flops, minimal models, etc.
\newblock In {\em Surveys in differential geometry. Vol. I: Proceedings of the
  conference on geometry and topology, held at Harvard University, Cambridge,
  MA, USA, April 27-29, 1990}, pages 113--199. Providence, RI: American
  Mathematical Society; Bethlehem, PA: Lehigh University, 1991.

\bibitem{Kollar1995E}
J.~Koll{\'a}r.
\newblock {\em Rational curves on algebraic varieties}, volume~32 of 
{\em Ergeb. Math. Grenzgeb., 3. Folge}.
\newblock Berlin: Springer-Verlag, 1995.

\bibitem{KollarMiyaokaMori1992}
J.~Koll{\'a}r, Y.~Miyaoka, and S.~Mori.
\newblock Rationally connected varieties.
\newblock {\em J. Algebr. Geom.}, 1(3):429--448, 1992.

\bibitem{Kusakabe2020IJM}
Y.~Kusakabe.
\newblock An implicit function theorem for sprays and applications to {O}ka
  theory.
\newblock {\em Internat. J. Math.}, 31(9):2050071, 9, 2020.

\bibitem{Kusakabe2021IUMJ}
Y.~Kusakabe.
\newblock Elliptic characterization and localization of {O}ka manifolds.
\newblock {\em Indiana Univ. Math. J.}, 70(3):1039--1054, 2021.

\bibitem{Kusakabe2024AM}
Y.~Kusakabe.
\newblock Oka properties of complements of holomorphically convex sets.
\newblock {\em Ann. Math. (2)}, 199(2):899--917, 2024.

\bibitem{Kutzschebauch2020}
F.~Kutzschebauch.
\newblock Manifolds with infinite dimensional group of holomorphic
  automorphisms and the linearization problem.
\newblock In {\em Handbook of group actions. {V}}, volume~48 of {\em Adv. Lect.
  Math. (ALM)}, pages 257--300. Int. Press, Somerville, MA, [2020] \copyright
  2020.

\bibitem{Larusson2004}
F.~L{\'a}russon.
\newblock Model structures and the {O}ka principle.
\newblock {\em J. Pure Appl. Algebra}, 192(1-3):203--223, 2004.

\bibitem{LarussonTruong2017}
F.~L\'arusson and T.~T. Truong.
\newblock Algebraic subellipticity and dominability of blow-ups of affine
  spaces.
\newblock {\em Doc. Math.}, 22:151--163, 2017.

\bibitem{LarussonTruong2019}
F.~L\'{a}russon and T.~T. Truong.
\newblock Approximation and interpolation of regular maps from affine varieties
  to algebraic manifolds.
\newblock {\em Math. Scand.}, 125(2):199--209, 2019.

\bibitem{Royden1967JAM}
H.~L. Royden.
\newblock Function theory on compact {R}iemann surfaces.
\newblock {\em J. Analyse Math.}, 18:295--327, 1967.

\bibitem{Serre1959}
J.-P. Serre.
\newblock On the fundamental group of a unirational variety.
\newblock {\em J. Lond. Math. Soc.}, 34:481--484, 1959.

\bibitem{Stout1965TAMS}
E.~L. Stout.
\newblock Bounded holomorphic functions on finite {R}iemann surfaces.
\newblock {\em Trans. Amer. Math. Soc.}, 120:255--285, 1965.

\bibitem{Varolin2001}
D.~Varolin.
\newblock The density property for complex manifolds and geometric structures.
\newblock {\em J. Geom. Anal.}, 11(1):135--160, 2001.

\end{thebibliography}



\end{document}